\newcommand\half{\frac12}
\theoremstyle{plain}
\newtheorem{theorem}{Theorem}[section]
\theoremstyle{remark}
\newtheorem{remark}[theorem]{Remark}
\newtheorem{example}[theorem]{Example}
\theoremstyle{plain}
\newtheorem{lemma}[theorem]{Lemma}
\newtheorem{proposition}[theorem]{Proposition}
\newtheorem{definition}[theorem]{Definition}
\newtheorem{hypothesis}[theorem]{Hypothesis}
\numberwithin{equation}{section}
\newcommand\curl{\operatorname{curl}}
\newcommand\id{\operatorname{id}}
\newcommand{\eps}{\varepsilon}
\def\Rnu{{\mathbb R}}
\def\im{\mbox{im\,}}
\def\liml{\lim\limits}
\def\suml{\sum\limits}
\def\supl{\sup\limits}
\def\intl{\int\limits}
\def\supl{\mathop{\sup}\limits}
\begin{document}
\title{Global solutions of the random vortex filament equation}

\author{Z. Brze\'{z}niak$^\dag$, M. Gubinelli$^\ddag$ and
M. Neklyudov$^\dag$}

\address{
$^\dag$Department of Mathematics, University of York, Heslington,
UK
\\
$^\ddag$CEREMADE \& CNRS UMR 7534, Universit\'e Paris-Dauphine,  France
}

\thanks{This research has been supported by the ANR Project ECRU (ANR-09-BLAN-0114-01/2)}

\date{\today}

\begin{abstract}
We prove the existence of a global solution for the
 filament equation with inital condition given by a geometric rough path in the sense of Lyons~\cite{[Lyons-1998]}. Our work gives a positive answer to a question left open in recent publications: Berselli and  Gubinelli \cite{[Bers+Gub_2007]}  showed the existence of a global solution for a smooth  initial condition  while  Bessaih, Gubinelli, Russo \cite{[Bess+Gub_2005]} proved the existence of a local solution for a general  initial condition given by a rough path.
\end{abstract}

\maketitle

\section{The filament equation}
In this note we prove the existence of a global solution for the following rough
 filament equation
\begin{equation}\label{eqn:RandomFilam-1}
\begin{cases}
\frac{d\gamma(t)(\xi)}{dt} = u^{\gamma(t)}(\gamma(t)(\xi)), \;\; t\in [0,\infty),\xi\in[0,1], \\
\gamma(0) = \gamma_0,
\end{cases}
\end{equation}
where the initial condition $\gamma_0:[0,1]\to\mathbb{R}^3$ is a  geometric $\nu$-rough path (for some  $\nu\in(\frac{1}{3},1)$),  see Assumption \ref{ass:RoughPathApproximability}. Here $\gamma:[0,\infty)\to \mathcal{D}_{\gamma_0}\subset \mathcal{C}$ is some trajectory in
the subset $\mathcal{D}_{\gamma_0}$ of the space $\mathcal{C}\subseteq C([0,1];\Rnu^3)$ of continuous closed curves in
$\Rnu^3$ parametrized by the interval $[0,1]$. Finally $u^\gamma$, $\gamma\in \mathcal{D}_{\gamma_0}$ is a vector field
given by
\begin{equation}
\label{eqn:RandomFilam-3}
u^\gamma(x)=\int_0^1\nabla\phi(x-\gamma(\xi))\times d\gamma(\xi),
\end{equation}
where $\phi:\Rnu^3\to\Rnu$ is a smooth function which satisfies
certain assumptions (see Hypothesis \ref{hyp:PhiAssumptions}).
The exact meaning of the line integral above and the definition of the set $\mathcal{D}_{\gamma_0}$ will be given below. Equation \eqref{eqn:RandomFilam-1} has its origin in
the theory of the three dimensional Euler equations.
It is well known that for the two dimensional Euler equations the vorticity ${\omega}=\curl {u}$ is transported along the fluid flow.
The situation changes drastically in three dimensional case. Additional ``stretching" term in the equation defining the vorticity
leads to a possibility of its blow up in finite time.
Furthermore, a result of Beale,
Kato, and Majda \cite{[BKM-1984]} suggests that a possible
singularity of the Euler equations appears when the vorticity field of the
fluid blows up. Consequently, understanding the behaviour of the vorticity of an ideal incompressible fluid is an  important problem in fluid dynamics.

The properties of the motion of the vorticity has been studied for the last 150 years starting from the works of
Helmholtz \cite{Helm1858} and Kelvin \cite{Kelvin1869}. It has been suggested by Kelvin  to use the so called Biot-Savart law
\[
{u}({x})=\int_{\mathbb{R}^3}\frac{{x}-{y}}{\vert {x}-{y}\vert^3}\times{\omega}({y})d{y},\qquad{x}\in\mathbb{R}^3
\]
where $\times$ denotes the vector product in $\mathbb{R}^3$,
combined with an assumption that the vorticity is supported by a smooth closed curve evolving in time $\gamma:\mathbb{R}_+\times[0,1]\to\mathbb{R}^3$ via a formula 
\[
\omega({x},t)=\Gamma\int_0^1\delta({x}-\gamma(t,\xi))\frac{\partial \gamma(t,\xi)}{\partial \xi}d\xi,\qquad x\in\mathbb{R}^3,t\geq 0
\]
and definition of the flow
\begin{equation}\label{equa21}
\left\{
\begin{array}{rcl}
\frac{d {X}_t({x})}{dt}&=&{u}({X}_t({x}),t),\;\;t\geq 0,\\
{X}_0({x})&=&{x}.
\end{array}
\right.
\end{equation}
to formally deduce the \emph{filament equation}
\begin{equation}\label{eqn:RandomFilamClassic}
\frac{\partial \gamma}{\partial t}(t,\xi)=-\frac{\Gamma}{4\pi}\int_0^1\frac{\gamma(t,\xi)-\gamma(t,\eta)}{\vert \gamma(t,\xi)-\gamma(t,\eta)\vert^3}\times\frac{\partial\gamma(t,\eta)}{\partial \eta}d\eta.
\end{equation}
The assumption that the vorticity is supported by some curve is coherent with numerical simulations of 3D turbulent fluids
which show that regions of large  vorticity   have a form of a
``filament", see for instance \cite{[Bell+Markus_1992]} and 
\cite{[VincMeneguzzi-1991]}.

Equation \eqref{eqn:RandomFilamClassic} has singularity when $\xi$ and $\eta$ are close to each other and the initial curve $\gamma$ is smooth.
As a consequence, the energy of the solution of this equation given by the formula
\[
E(t)=\frac{\Gamma^2}{8\pi}\int_0^1\int_0^1\frac{1}{\vert \gamma(t,\xi)-\gamma(t,\eta)\vert }\frac{\partial \gamma(t,\eta)}{\partial \eta}\cdot\frac{\partial \gamma(t,\xi)}{\partial \xi}d\xi d\eta ,\qquad t\geq 0
\]
is infinite for any smooth curve $\gamma(t,\cdot)$. In order to avoid the singularity
Rosenhead~\cite{[Rosenhead-1930]}  introduced the regularized  model
\begin{equation}\label{eqn:RandomFilamRosenhead}
\frac{\partial \gamma}{\partial t}(t,\xi)=-\frac{\Gamma}{4\pi}\int_0^1\frac{\gamma(t,\xi)-\gamma(t,\eta)}{(\vert \gamma(t,\xi)-\gamma(t,\eta)\vert^2+\mu^2)^{3/2}}\times\frac{\partial\gamma(t,\eta)}{\partial \eta}d\eta
\end{equation}
which is widely used in aereonautics to study the dynamics of trailing vortices at the tips of airplane wings.

The problem~\eqref{eqn:RandomFilam-1} has been studied  by Berselli and Bessaih~\cite{[Bers+Bess_2002]} and then by Berselli and Gubinelli~\cite{[Bers+Gub_2007]}.
It contains the equation~\eqref{eqn:RandomFilamRosenhead}  as a  particular case when, for for some $\mu>0$, 
\[
\phi({x})=\frac{\Gamma}{(\vert {x}\vert^2+\mu^2)^{\half}},\qquad {x}\in\mathbb{R}^3.
\]
Equation~\eqref{eqn:RandomFilam-1} is in fact  a nonlinear PDE for a function $\gamma: [0,\infty)\times[0,1]\to \mathbb{R}^3$. A natural setting for the  well-posedness of the  corresponding Cauchy problem is obtained by requiring  the vector field $u$ to be well defined and Lipshitz with respect to  the space variable.
To this effect the approach followed in~\cite{[Bers+Bess_2002]} is to set up the equation as an evolution problem in the Sobolev space $H^1$ of closed absolutely continuous curves in $\mathbb{R}^3$ with square integrable first derivative (with respect to the parameter). This approach implies that the vector field (once lifted to $H^1$) does not allow   strong enough estimates to  deduce the global existence.
This issue is ultimately due to the fact that in the $3$-dimensional  incompressible flows, the  vortices stretch and undergo a complex dynamics and that a priori this could lead to a explosion of the $H^1$ norm.
Such a difficulty should be compared to the more stable behavior of 2d vortex points which, under incompressible flows, are simply transported along the flow lines.
Exploiting the conservation of the kinetic energy of the flow and a control of the velocity field generated by the vortex line via the associated kinetic energy Berselli and  Gubinelli \cite{[Bers+Gub_2007]}  showed the existence of a global solution to  equation~\eqref{eqn:RandomFilam-1} with initial conditions in $H^1$.

Bessaih, Gubinelli and Russo \cite{[Bess+Gub_2005]}, partially motivated by the random filament models suggested by Gallavotti~\cite{Gallavotti} and  Chorin~\cite{Chorin}, considered the above evolution problem when the initial data  is a random closed curve. For definiteness they  considered the initial data sampled from the law of a $3$-dimensional Brownian loop (a Brownian motion starting at $0\in\mathbb{R}^3$ and conditioned to return to $0\in\mathbb{R}^3$ at time $1$).
In this case it is no more possible to set up the problem in the Sobolev space $H^1$ since Brownian trajectories almost surely do not belong to this space. A more serious problem is the meaning to give to the generalized Biot-Savart relation~\eqref{eqn:RandomFilam-3} since the line integral along a Brownian trajectory does not allow a straighforward definition. Moreover stochastic integration (\`a la It\^o or Stratonovich) does not provide a good framework to study this problem since it does not posses a natural filtration on the parameter space  stable under the time evolution. For these reasons~\cite{[Bess+Gub_2005]} identified the spaces of controlled rough paths as a natural framework to have a well posed problem.

Rough path theory has been introduced by T. J. Lyons in the seminal paper~\cite{[Lyons-1998]} (see also~\cites{[LyonsQian-2002],[LionsStFlour],[FrizVictoir]}) as a way to overcome certain difficulties of stochastic integration theories and have a robust analytical framework to solve stochastic differential equations and similar problems involving integration of non-regular vector-fields.
It turns out that rough paths theory and in particular the notion of \emph{controlled paths} introduced in~\cite{[Gubinelli-2004]} allows to give a natural interpretation to the Biot-Savart relation~\eqref{eqn:RandomFilam-3} and obtain a well-posed problem. Using this approach  Bessaih, Gubinelli, Russo \cite{[Bess+Gub_2005]}  obtained existence of a local solution to  eq.~\eqref{eqn:RandomFilam-1} when the initial data is a closed curve of
H\"{o}lder class with exponent $\nu\in ({1}/{3},1]$ (suitably lifted to rough path space).

The aim of the present paper is to extend the energy method of Berselli and Gubinelli to the rough path setting and obtain  a global existence result for the equation~\eqref{eqn:RandomFilam-1}  when the  initial data is a geometric rough path, thus completing the analysis of \cite{[Bess+Gub_2005]}.

Recently there have been some deep progresses in the study of evolution equations in the space of controlled rough paths. In particular Hairer~\cite{Hairer} showed how to use controlled path theory to have well-posedness of a multidimensional Burgers type equation driven by additive space-time white noise and later~\cite{KPZ} used similar ideas to tackle the longstanding open problem of the Kardar--Parisi--Zhang equation for which he described a notion of solution for which existence and uniqueness can be proven.. The key technical tool to obtain these results has been the observation that the more singular non-linear term involved in the fixed-point argument has the same structure of the Biot-Savart relation~\eqref{eqn:RandomFilam-3} and thus can be similarly handled via controlled paths techniques.

A major open problem which remains largely unexplored is the rigorous study of the unregularized filament equation~\eqref{eqn:RandomFilamClassic}, whose singularity seems to defy any reasonable analytic approach. On this respect the very suggestive heuristic computations of Gallavotti~\cite{Gallavotti} seems to hint to the fact that rapid oscillations of the curve $\gamma$ could provide a natural regularization. Another very interesting problem is the study of the rough filament equation as an Hamiltonian system on the space of parametrization invariant paths on the lines of~\cite{arnold_topological_1998} and~\cite{[Brylinski-1992]}.

\bigskip
\textbf{Acknowledgments.} The authors would like to thank the anonymous referees for their careful reading and the detailed comments which helped us to improve the presentation.
\vfill\newpage

\section{Definition and Properties of Rough Path integrals}
In this section we will present the controlled path framework and we will  define the rough path integral and state some of its properties. We will mainly follow the papers \cite{[Gubinelli-2004]} and
\cite{[Bess+Gub_2005]}.

 In what follows by $V,W$ we will usually denote  Banach spaces, by $L(V,W)$ the Banach space of bounded linear maps from $V$ to $W$ and by $C(X,Y)$ the space of continuous function from $X$ to $Y$. Moreover we let $\mathbb{T}=[0,1]$ and
\begin{eqnarray*}
C_n(V)&=&\{f\in C(\mathbb{T}^n,V):\; f(t_1,t_2,\cdots,t_n)=0 \text{ if  $t_i=t_{i+1}$ for some $i=1,..,n-1$}\}.
\end{eqnarray*}
We will understand that $C(V) = C_1(V)$ and  we will consider the H\"older norm
$$
\vert f\vert_{\mu}=\supl_{a\in\mathbb{T}}|f(a)|+ \supl_{0\le a<b\le 1}\frac{\vert f(a)-f(b)\vert_{V}}{\vert a-b\vert^{\mu}}, \;\; f\in C_1(V);
$$
on $C_1(V)$ and the following semi-norms on the spaces 
 $C_2(V)$ and $ C_3(V)$
$$
\vert f\vert_{\mu}=\supl_{0\le a<b\le 1}\frac{\vert f(a,b)\vert_{V}}{\vert a-b\vert^{\mu}};\;\; f\in C_2(V), 
\qquad
\vert g\vert_{\mu}=\supl_{0\le a<b<c\le 1}\frac{\vert g(a,b,c)\vert_{V}}{\vert a-c\vert^{\mu}}, \;\; g\in C_3(V).
$$
The  associated normed spaces are defined by 
$$
C^\mu(V)= C_1^\mu(V) = \{f\in C_1(V):\, \vert f\vert_{\mu}<\infty\},
$$
$$
C_2^\mu(V) = \{f\in C_2(V):\, \vert f\vert_{\mu}<\infty\},
\qquad
C_3^\mu(V) = \{g\in C_3(V):\, \vert g\vert_{\mu}<\infty\}.
$$
For $n=1,2$ define the operators $\delta-\delta_n:C_n(V)\to C_{n+1}(V)$ as
$$
\delta_1 f(a,b) = f(b)-f(a),\qquad
\delta_2 g(a,b,c) = g(a,c)-g(a,b)-g(b,c)
$$
for all $a,b,c\in\mathbb{T}$. They satisfy the following fundamental property
$
\delta\delta f=0,
$ for all $f\in C_1(V)$. Let
$\mathcal{Z}C_2^\mu(V) = C_3^\mu(V)\cap\ker\delta $ and $\mathcal{B}C_2^\mu(V) = C_3^\mu(V)\cap\im\delta $. If $g\in \mathcal{Z} C_2^\mu(V)$ then  $g\in\mathcal{B}C_2^\mu(V)$, i.e. is there exists $f\in C_1(V)$ such that $\delta f = g$.
Then following  result  has been proved in
\cite{[Gubinelli-2004]}:
\begin{proposition}[Sewing lemma]
For any $\mu>1$ there exists an unique  linear map (Sewing map)
$\Lambda:\mathcal{Z}C_3^{\mu}(V)\to C_2^{\mu}(V)$ such that
$$
\delta\Lambda=\id_{\mathcal{Z}C_3^{\mu}(V)}.
$$
This map is continuous and we have
$$
\Vert \Lambda h\Vert_{\mu}\leq
\frac{1}{2^{\mu}-2}\Vert h\Vert_{\mu},\qquad h\in\mathcal{Z}C_3^{\mu}(V).
$$
\end{proposition}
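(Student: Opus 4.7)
\begin{bew}{(proof plan)}{}

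\textbf{Uniqueness.} Suppose $\Lambda_1,\Lambda_2$ are two linear maps with the stated properties, and set $f=\Lambda_1 h-\Lambda_2 h\in C_2^{1+}(V)$. Then $\delta f=h-h=0$, so $f\in\mathcal{Z}C_2(V)=\mathcal{B}C_1(V)$ by the exactness noted above, i.e.\ $f=\delta g$ for some $g\in C_1(V)$. Since $f\in C_2^{\mu}(V)$ for some $\mu>1$, one has $|g(t)-g(s)|\le |f|_\mu |t-s|^\mu$, which on the connected circle $S^1$ forces $g$ to be constant, hence $f=\delta g=0$. This gives $\Lambda_1=\Lambda_2$.

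\textbf{Existence by dyadic approximation.} Fix $s,u\in S^1$ on the same arc and set $t_i^n=s+i(u-s)/2^n$ for $i=0,\dots,2^n$. Define the Riemann-type sum
\[
M_n(s,u):=\suml_{i=0}^{2^n-1}h(s,t_i^n,t_{i+1}^n),
\]
with $M_0(s,u)=h(s,s,u)=0$. I would show $(M_n)_n$ is Cauchy by comparing consecutive dyadic levels: if $m_i^n$ denotes the midpoint of $[t_i^n,t_{i+1}^n]$, applying the cocycle relation $\delta h(s,t_i^n,m_i^n,t_{i+1}^n)=0$ yields
\[
M_{n+1}(s,u)-M_n(s,u)=\suml_{i=0}^{2^n-1}h(t_i^n,m_i^n,t_{i+1}^n).
\]
For $h\in C_3^{\rho,\mu-\rho}(V)$ with $\rho\in(0,\mu)$ and $\mu>1$, each term is bounded by $\|h\|_{\rho,\mu-\rho}\bigl(2^{-n-1}(u-s)\bigr)^\mu$, giving $|M_{n+1}-M_n|\le \|h\|_{\rho,\mu-\rho} 2^{-\mu}\,2^{n(1-\mu)}(u-s)^\mu$. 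Summing the geometric series (convergent since $\mu>1$) produces a limit $\Lambda h(s,u):=\liml_{n\to\infty}M_n(s,u)$ satisfying
\[
|\Lambda h(s,u)|\le \|h\|_{\rho,\mu-\rho}\,\frac{2^{-\mu}}{1-2^{1-\mu}}(u-s)^\mu=\frac{\|h\|_{\rho,\mu-\rho}}{2^\mu-2}(u-s)^\mu,
\]
which, after infimizing over admissible decompositions $h=\sum h_i$, yields the asserted bound $\|\Lambda h\|_\mu\le(2^\mu-2)^{-1}\|h\|_\mu$ and in particular $\Lambda h\in C_2^{1+}(V)$.

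\textbf{The cocycle identity $\delta\Lambda h=h$.} Fix $s,t,u$ and choose dyadic partitions $\Pi_n$ of $[s,u]$ refined so as to contain $t$. Splitting the sum defining $M^{\Pi_n}(s,u)$ at $t$, the block over $[t,u]$ has first argument $s$ rather than $t$. Applying $\delta h=0$ to the quadruples $(s,t,t_i,t_{i+1})$ and telescoping replaces this block by $M^{\Pi_n\cap[t,u]}(t,u)+h(s,t,u)$, so
\[
M^{\Pi_n}(s,u)=M^{\Pi_n\cap[s,t]}(s,t)+M^{\Pi_n\cap[t,u]}(t,u)+h(s,t,u).
\]
Passing to the limit (justified by the same geometric bound, applied to each sub-interval together with a partition-independence argument for the limit) yields $(\delta\Lambda h)(s,t,u)=\Lambda h(s,u)-\Lambda h(s,t)-\Lambda h(t,u)=h(s,t,u)$.

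\textbf{Main obstacle.} The routine step is the geometric-series estimate; the genuinely delicate point is the bookkeeping in the last step: one must verify that the limit $\Lambda h$ is independent of the choice of dyadic refinement scheme, so that partitions refined to contain an arbitrary intermediate point $t$ still produce the same limit. This amounts to showing that inserting a single point into any partition changes the Riemann sum by $h$ applied to the inserted triple (again via $\delta h=0$), and that general refinements are therefore dominated by the same kind of $2^{n(1-\mu)}$-geometric tail.

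\end{bew}
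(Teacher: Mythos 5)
The paper gives no proof of this proposition at all --- it is quoted from \cite{[Gubinelli-2004]} (Proposition~1 there, the ``sewing'' map $\Lambda$) --- and your argument is essentially the proof in that reference: uniqueness from the exactness $\mathcal{Z}C_2=\mathcal{B}C_1$ plus the fact that a H\"older function of exponent $\mu>1$ on the connected set $S^1$ is constant, and existence from dyadic Riemann sums with the (correct) telescoping identity $M_{n+1}-M_n=\sum_i h(t_i^n,m_i^n,t_{i+1}^n)$, whose geometric series yields exactly the constant $(2^{\mu}-2)^{-1}$. The one step you leave as a plan --- partition-independence of the limit, needed to pass from the splitting identity $M^{\Pi}(s,u)=M^{\Pi\cap[s,t]}(s,t)+M^{\Pi\cap[t,u]}(t,u)+h(s,t,u)$ to $\delta\Lambda h=h$ --- is the standard point-removal (Young-type) argument, and you have correctly identified both the obstacle and the tool ($\delta h=0$ applied to quadruples), so the outline is sound.
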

Now, we define a class of paths for which rough path integral will
be defined.
\begin{definition}\label{def-controlled} Assume that $\nu \in (0,1)$ and  fix $X\in C^{\nu}(V)$. 
We say that a path $Y\in C(V)$ is weakly controlled by $X$ if
there exist a function $Y'\in C^{\nu}(L(V,V))$ such that the function 
\begin{equation}
R(\xi,\eta) = Y(\xi)-Y(\eta) - Y'(\eta)(X(\xi)-X(\eta)) ,\qquad\xi,\eta\in
\mathbb{T}, \label{eqn:WeakControlDef-1}
\end{equation}
belongs to $C_2^{2\nu}(V)$. We denote $\mathcal{D}_X$ the set of all the pairs $(Y,Y')$ satisfying~\eqref{eqn:WeakControlDef-1}. This is a vector space which can be endowed with the semi-norm $\Vert \cdot\Vert_{\mathcal{D}_X}$  given by the following formula
\begin{equation}
\Vert (Y,Y')\Vert_{\mathcal{D}_X}=\vert Y'\vert_{C^{\nu}}+\Vert R\Vert_{C_2^{2\nu}},\label{eqn:D_XNorm-1}
\end{equation}
Furthermore, let us define a  norm $\Vert \cdot\Vert_{\mathcal{D}_X}^\ast$  in
$\mathcal{D}_X$ by
\begin{equation}
\Vert (Y,Y')\Vert_{\mathcal{D}_X}^\ast=\Vert Y\Vert_{\mathcal{D}_X}+\sup_{\xi\in \mathbb{T}} |Y(\xi)|. \label{eqn:D_XNorm-2}
\end{equation}
With these definitions
$(\mathcal{D}_X,\Vert \cdot\Vert_{\mathcal{D}_X}^\ast)$ is a Banach space. While not necessary for the integration theory it will convenient for our intended application to assume that all our controlled paths take values in  $V=\mathbb{R}^3$ and are closed, that is $Y(0)=Y(1)$.
\end{definition}

In the present paper an element $(Y,Y^\prime)\in \mathcal{D}_X$ can always be canonically identified from its first component $Y$ by the mean of standard procedures discussed below. In this context we denote by $R^Y$ the corresponding remainder: $R^Y = \delta Y - Y' \delta X$. Moreover we will often
omit to specify $Y^\prime$ when it is clear from the context and write
$\Vert Y\Vert_{\mathcal{D}_X}$ instead of $\Vert (Y,Y^\prime)\Vert_{\mathcal{D}_X}$.

We will repeatedly use the basic estimate
\begin{equation}
\Vert Y\Vert_{C^{\nu}}\leq
\Vert Y\Vert_{\mathcal{D}_X}^*(1+\Vert X\Vert_{C^{\nu}}).\label{eqn:HolderIneq}
\end{equation}

Controlled paths are stable by mapping via sufficiently regular functions. 
\begin{lemma}\label{lem:RegularMap}
Let $\phi\in C^2(\Rnu^3,\Rnu^3)$ and $(Y,Y')\in \mathcal{D}_X$.
Then
\begin{equation}
(W,W^\prime):=(\phi(Y),\phi^\prime(Y)Y')\in
\mathcal{D}_X\label{eqn:RegularMap-1}
\end{equation}
and the corresponding remainder has the following representation
\begin{eqnarray}
R^{W}(\xi,\eta)&=&\phi^\prime(Y(\xi))R^Y(\xi,\eta)+
(Y(\eta)-Y(\xi))\times
\nonumber
\\&& \times\intl_0^1[\nabla\phi(Y(\xi)+r(Y(\eta)-Y(\xi)))-\nabla\phi(Y(\xi))]dr,\qquad\xi,\eta\in
\mathbb{T}.
\label{eqn:RegularMap-2}
\end{eqnarray}
Furthermore, there exists a
constant $K\geq 1$ such that
\begin{equation}
\Vert \phi(Y)\Vert_{\mathcal{D}_X}\leq
K\Vert \nabla\phi\Vert_{C^1}\Vert Y\Vert_{\mathcal{D}_X}(1+\Vert Y\Vert_{\mathcal{D}_X})(1+\Vert X\Vert_{\nu})^2.\label{eqn:RegularMap-3}
\end{equation}
Moreover, if $(\tilde{Y},\tilde{Y}')\in \mathcal{D}_{\tilde{X}}$
and
$
(\tilde{W},\tilde{W}^\prime):=(\phi(\tilde{Y}),\phi^\prime(\tilde{Y})\tilde{Y}')
$
then
\begin{equation}
\begin{split}
\label{eqn:RegularMap-4}
\Vert W^\prime-\tilde{W}^\prime\Vert_{\nu}+\Vert R^W-R^{\tilde{W}}\Vert_{2\nu}+\Vert W-\tilde{W}\Vert_{\nu}\leq\\
C(\Vert X-\tilde{X}\Vert_{\nu}+\Vert Y^\prime-\tilde{Y}^\prime\Vert_{\nu}+\Vert R^Y-R^{\tilde{Y}}\Vert_{2\nu}+\Vert Y-\tilde{Y}\Vert_{\nu})
\end{split}
\end{equation}
with
\begin{equation}
C=K\Vert \phi\Vert_{C^3}(1+\Vert X\Vert_{C^{\nu}}+\Vert \tilde{X}\Vert_{C^{\nu}})^3\vert (1+\Vert Y\Vert_{\mathcal{D}_X}+\Vert \tilde{Y}\Vert_{\mathcal{D}_{\tilde{X}}})^2.\label{eqn:RegularMap-4'}
\end{equation}
When $X=\tilde{X}$ we have
\begin{equation}
\label{eqn:RegularMap-4''}
\begin{split}
\Vert \phi(Y)-\phi(\tilde{Y})\Vert_{\mathcal{D}_X}\leq
& K\Vert \nabla\phi\Vert_{C^2}\Vert Y\Vert_{\mathcal{D}_X}\\
(1+&\Vert Y\Vert_{\mathcal{D}_X}+\Vert \tilde{Y}\Vert_{\mathcal{D}_X})^2(1+\Vert X\Vert_{C^{\nu}})^4
\Vert Y-\tilde{Y}\Vert_{\mathcal{D}_X}.
\end{split}
\end{equation}
\end{lemma}
\begin{proof}
See \cite{[Gubinelli-2004]}, Proposition $4$ for all statements of
the Lemma, except \eqref{eqn:RegularMap-2} (which is actually also
proven, though not stated explicitly). Let us show
\eqref{eqn:RegularMap-2}. Denote
$y(r)=Y(\xi)+r(Y(\eta)-Y(\xi)),r\in[0,1]$. Then
\begin{eqnarray}
\label{eqn:RegularMap-2proof}
&&
\phi(y(1))-\phi(y(0))=\intl_0^1\phi^\prime(y(r))y^\prime(r)dr
\\
\nonumber
&=&\suml_k(Y^k(\eta)-Y^k(\xi))\intl_0^1\frac{\partial\phi}{\partial
x_k}(y(r))dr
=\suml_k\frac{\partial\phi}{\partial
x_k}(Y(\xi))(Y^k(\eta)-Y^k(\xi))\nonumber
\\
&+&\suml_k(Y^k(\eta)-Y^k(\xi))\intl_0^1[\frac{\partial\phi}{\partial
x_k}(y(r))-\frac{\partial\phi}{\partial x_k}(Y(\xi))]dr\nonumber
\\
&=&\suml_{k,l}\frac{\partial\phi}{\partial
x_k}(Y(\xi))(Y^\prime)^{kl}(X^l(\eta)-X^l(\xi))
+\suml_{k}\frac{\partial\phi}{\partial
x_k}(Y(\xi))(R^Y)^k(\xi,\eta)\nonumber
\\
&+&\suml_k(Y^k(\eta)-Y^k(\xi))\intl_0^1[\frac{\partial\phi}{\partial
x_k}(y(r))-\frac{\partial\phi}{\partial
x_k}(Y(\xi))]dr,
\nonumber
\end{eqnarray}
and the result follows.
\end{proof}
In order to define the integral for paths controlled by $X$  we need a further ingredient: the existence of basic integrals of $X$ with respect to $X$ itself. These objects are encoded into what is refereed to as a \emph{rough path} as follows.
\begin{definition}\label{def:RoughPath}
Assume that  $\nu>{1}/{3}$. We say that couple
$\mathbb{X}=(X,\mathbb{X}^2)$ where $X\in C^{\nu}(\mathbb{T},\Rnu^3)$ and
$\mathbb{X}^2\in C_2^{2\nu}(L(\Rnu^3,\Rnu^3))$ is a $\nu$-rough
path if the following condition is satisfied:
\begin{equation}\label{eqn:CompatCondition}
\mathbb{X}^{2}(\xi,\rho)-\mathbb{X}^{2}(\xi,\eta)-\mathbb{X}^{2}(\eta,\rho)
=(X(\xi)-X(\eta))\otimes(X(\eta)-X(\rho)), \quad \xi,\eta,\rho\in \mathbb{T}.
\end{equation}
\end{definition}
\begin{remark}
If $\nu>1/2$ the solution $\mathbb{X}^{2}$ to~\eqref{eqn:CompatCondition} is unique. Indeed, assume that there exists another
$\mathbb{X}_1^{2}$ which satisfies definition \ref{def:RoughPath}.
Put $G(\xi)=\mathbb{X}^{2}(\xi,0)-\mathbb{X}_1^{2}(\xi,0)$. Then
by condition \eqref{eqn:CompatCondition}
$$
\mathbb{X}^{2}(\xi,\rho)-\mathbb{X}_1^{2}(\xi,\rho)=G(\xi)-G(\rho),
$$
and, since $\mathbb{X}^{2}\in C_2^{2\nu}$, $G$ is a H\"{o}lder
function of order bigger  than $1$. Hence,  $G=0$ and thus 
$\mathbb{X}_1^{2}=\mathbb{X}^{2}$.

If $\nu>1$ and $\mathbb{X}$ is a $\nu$-rough path, then
$\mathbb{X}$ is given by the pair of constants $(X(0),0)$. Indeed,
in this case $X$ is H\"older function with exponent more than $1$
i.e. $X$ is equal to a constant function  $X(0)$ and the only possible choice for the second component is $\mathbb{X}^{2}=0$.

If $\nu\in({1}/{2},1]$ then $\mathbb{X}^2$ is
uniquely determined by $X$. Indeed, if we let
\begin{equation}
\mathbb{X}^{2,ij}(\xi,\eta)=\intl_{\xi}^{\eta}(X^i_{\rho}-X^i_{\eta})dX^j_{\rho},\xi,\eta\in
\mathbb{T}\; i,j=1,2,3, \qquad\label{eqn:YoungIntegralCase}
\end{equation}
where the integral is understood in the sense of Young (see
\cite{[Young-1936]}) it is possible to show that $\mathbb{X}^{2}\in C^{2\nu}_2$  and that this definition satisfies conditions of
Definition~\ref{def:RoughPath}. Since $\nu > 1/2$ it is the only possible choice.
\end{remark}
\begin{definition}\label{ass:RoughPathApproximability}
We say that our $\nu$-rough path $(X,\mathbb{X}^2)$  is a
geometric $\nu$-rough path if there exist a sequence
$(X_n,\mathbb{X}_n^2)$ such that
$
X_n\in C(\mathbb{T},\Rnu^3)
$ is piecewise smooth, and 
\begin{equation}
\lim_{n\to\infty}\big[\Vert X_n-X\Vert_{\nu}+\Vert \mathbb{X}_n^2-\mathbb{X}^2\Vert_{2\nu}\big]=
0,\label{eqn:RoughPathTopology}
\end{equation}
where
$$
\mathbb{X}_n^{2,ij}(\xi,\eta)=\intl_{\xi}^{\eta}(X_n^i({\rho})-X_n^i({\eta}))dX_n^j({\rho}),\qquad \xi,\eta\in
\mathbb{T},i,j=1,2,3.
$$
\end{definition}
\begin{example}
Let $\{B_t\}_{t\in [0,1]}$ be the standard $3$-dimensional Brownian
bridge such that $B_0=B_1=x_0$ and let $\mathbb{B}^{2,ij}$, $i,j=1,2,3$,  be the
area processes defined by
$$
\mathbb{B}^{2,ij}(\xi,\eta)=\intl_{\xi}^{\eta}(B^i_{\rho}-B^i_{\eta})dB^j_{\rho},
$$
where the integral can be understood either in the Stratonovich or in the It\^o
sense. Then, the couple $(B,\mathbb{B}^2)$ is a $\nu$-rough path for all $\nu\in(1/3,1/2)$ (see \cite{[Bess+Gub_2005]}*{p.1849}). Moreover, if the integral is understood in the Stratonovich sense,  then $(B,\mathbb{B}^2)$ is also a
geometric $\nu$-rough path.  Indeed, it follows from Theorem
3.1 in \cite{[Friz-2005]} that one can approximate $X$ with
piecewise linear dyadic $(X_n)_{n\ge 1}$ in the sense of Definition~\ref{ass:RoughPathApproximability} where the limit is understood in the almost sure sense.
\end{example}
From now on we suppose that the geometric $\nu$-rough path
$\mathbb{X}=(X,\mathbb{X}^2)$ with $\nu>1/3$ and the corresponding Banach space
$\mathcal{D}_X$ are fixed.  For a  finite
partition $\pi=\{\xi_0=\xi<\xi_1<\cdots<\xi_n=\eta\}$ be   of the interval  $[\xi,\eta]$, let  $d(\pi)=\supl_i\vert \xi_{i+1}-\xi_i\vert $ denote the mesh size of the partition $\pi$.
\begin{lemma}\label{lem:IntegralDef}
  If $Y,Z\in \mathcal{D}_X$ then the limit
\begin{equation}
\lim_{d(\pi)\to
0}\sum_{i=0}^{n-1}[Y(\xi_i)(Z(\xi_{i+1})-Z(\xi_i))+Y^\prime(\xi_i)Z^\prime(\xi_i)\mathbb{X}^2(\xi_{i+1},\xi_i)] =: \int_{\xi}^{\eta}YdZ\label{eqn:IntegralDef-1}
\end{equation}
exists and defined the integral on the r.h.s.
\end{lemma}
\begin{proof}
This statement is a direct consequence of the existence and properties of the sewing map and of the definition of the controlled path space, see \cite{[Gubinelli-2004]}, Theorem $1$. For the sake of the presentation we sketch a proof. Let us denote
$$
A(\xi',\eta')= Y(\xi')(Z(\eta')-Z(\xi'))+Y^\prime(\xi')Z^\prime(\xi')\mathbb{X}^2(\eta',\xi'), \;\; \eta',\xi'\in\mathbf{T}.
$$
Let us note that, with the assumptions of the Lemma,  $\delta A \in C^{3\nu}_3$. Hence,  provided that $3\nu> 1$, we can apply the sewing map and we have the decomposition
$
A = \delta I +  \Lambda \delta A
$
where $I\in C_1$. But this implies that
$$
\lim_{d(\pi)\to
0}\sum_{i=0}^{n-1} A(\xi_i,\xi_{i+1}) =\lim_{d(\pi)\to
0}\sum_{i=0}^{n-1} (I(\xi_{i+1})-I(\xi_i)) +  \lim_{d(\pi)\to
0}\sum_{i=0}^{n-1} (\Lambda \delta A(\xi_i,\xi_{i+1})).
$$
The first term on the r.h.s. telescopes and since $|\Lambda \delta A(\xi_i,\xi_{i+1})| \lesssim |\xi_{i+1}-\xi_i|^{3\nu}$ the second term on the r.h.s converges to zero proving the existence of the limit and moreover 
$$
\lim_{d(\pi)\to
0}\sum_{i=0}^{n-1} A(\xi_i,\xi_{i+1}) = I(\eta)-I(\xi).
$$
\end{proof}
The explicit representation in terms of the sewing map can be used to prove the various estimates for the rough integral below.
\begin{remark}\label{rem:SmoothCase}
In the case of $\nu>1/2$ the line integral defined in 
Lemma \ref{lem:IntegralDef} is reduced to the Young definition of the
line integral $\int YdZ$. Indeed, it is enough to notice that
second term in formula \eqref{eqn:IntegralDef-1} is of the order
$O(\vert \xi_{i+1}-\xi_i\vert^{2\nu})$, $2\nu>1$. In this case the line integral
does not depend upon $Y^\prime$, $Z^\prime$.
\end{remark}

Integrals of controlled paths have very nice continuity property with respect to variations of the controlled path and also with respect variations of the  rough path on which the controlled path space is modelled.

In the following $\mathbb{X}=(X,\mathbb{X}^2)$ and $\tilde{\mathbb{X}}=(\tilde{X},\tilde{\mathbb{X}}^2)$ are two $\nu$-rough paths and  $Y\in\mathcal{D}_X$ and $\tilde{Y}\in\mathcal{D}_{\tilde{X}}$ two path respectively controlled by $X$ and $\tilde X$. In this case we introduce the following distance
\begin{equation}\label{eqn-distance}
D(Y,\tilde{Y}) = \Vert X-\tilde{X}\Vert_{C^{\nu}}+\Vert \mathbb{X}^2-\mathbb{\tilde{X}}^2\Vert_{C_2^{2\nu}}+\Vert Y^\prime-\tilde{Y}^\prime\Vert_{C^{\nu}}+\Vert R^Y-R^{\tilde{Y}}\Vert_{C_2^{2\nu}}+\Vert Y-\tilde{Y}\Vert_{C^{\nu}}.
\end{equation}

\begin{lemma}\label{lem:IntegralProp}
Assume $Y,W\in \mathcal{D}_X$, $\tilde{Y},\tilde{W}\in
\mathcal{D}_{\tilde{X}}$. Define maps $Q,\tilde{Q}:\mathbb{T}^2\to\Rnu$ by
the following identities
\begin{equation}
Q(\eta,\xi):=\int_{\xi}^{\eta}YdW-Y(\xi)(W(\eta)-W(\xi))-Y^\prime(\xi)W^\prime(\xi)\mathbb{X}^2(\eta,\xi),\qquad\eta,\xi\in
\mathbb{T},\label{eqn:IntegralDef-2}
\end{equation}
and a similar expression for $\tilde{Q}$ with $Y,Y',W,W',\mathbb{X}^2$ replaced by $\tilde{Y},\tilde{Y}',\tilde{W},\tilde{W}',\tilde{\mathbb{X}}^2$.
Then $Q,\tilde{Q}\in C_2^{3\nu}$ and there exists a constant
$C=C(\nu)>0$ such that for all $Y,W\in \mathcal{D}_X$
\begin{equation}
\Vert Q\Vert_{C_2^{3\nu}}\leq
C(1+\Vert X\Vert_{C^{\nu}}+\Vert \mathbb{X}^2\Vert_{C_2^{2\nu}})\Vert Y\Vert_{\mathcal{D}_X}\Vert W\Vert_{\mathcal{D}_X}.\label{eqn:IntegralDef-3}
\end{equation}
Furthermore,
\begin{multline}\label{eqn:IntegralDef-5}
\Vert Q-\tilde{Q}\Vert_{C_2^{3\nu}}\leq
C(1+\Vert X\Vert_{C^{\nu}}+\Vert \mathbb{X}^2\Vert_{C_2^{2\nu}})\\
((\Vert Y\Vert_{\mathcal{D}_X}+\Vert \tilde{Y}\Vert_{\mathcal{D}_{\tilde{X}}})\eps_W+
(\Vert W\Vert_{\mathcal{D}_X}+\Vert \tilde{W}\Vert_{\mathcal{D}_{\tilde{X}}})\eps_Y+\eps_X).
\end{multline}
where
\[
\eps_Y=\Vert Y^\prime-\tilde{Y}^\prime\Vert_{C^{\nu}}+\Vert R^Y-R^{\tilde{Y}}\Vert_{C_2^{2\nu}}+\Vert Y-\tilde{Y}\Vert_{C^{\nu}},
\]
\[
\eps_W=\Vert W^\prime-\tilde{W}^\prime\Vert_{C^{\nu}}+\Vert R^W-R^{\tilde{W}}\Vert_{C_2^{2\nu}}+\Vert W-\tilde{W}\Vert_{C^{\nu}},
\]
\[
\eps_X=(\Vert Y\Vert_{\mathcal{D}_X}+\Vert \tilde{Y}\Vert_{\mathcal{D}_{\tilde{X}}})(\Vert W\Vert_{\mathcal{D}_X}+\Vert \tilde{W}\Vert_{\mathcal{D}_{\tilde{X}}})
(\Vert X-\tilde{X}\Vert_{C^{\nu}}+\Vert \mathbb{X}^2-\mathbb{\tilde{X}}^2\Vert_{C_2^{2\nu}}).
\]
\end{lemma}
\begin{proof}
See \cite{[Gubinelli-2004]}, Theorem 1. For formula
\eqref{eqn:IntegralDef-5} see   \cite{[Gubinelli-2004]}, p.104,
formula (27).
\end{proof}
By Lemmata \ref{lem:IntegralDef} and \ref{lem:RegularMap} for any
$A\in C^2(\Rnu^3,L(\Rnu^3,\Rnu^3)),Y\in\mathcal{D}_X$ we can a
define a map  $V^Y:\Rnu^3\to\Rnu$  by the means of the rough path integral as follows
\begin{equation}
V^Y(x):=\int_{0}^1A(x-Y(\xi))d Y(\xi),\qquad x\in\Rnu^3.
\end{equation}
For maps $F:\mathbb{R}^3\to W$ and $n\ge 0$ we denote by $\|F\|_{C^n}$ the usual norms
$$
\|F\|_{C^n} =   \sup_{0\le  |\alpha|\le n} \sup_{x\in\mathbb{R}^3}   |\nabla^\alpha F(x)|.
$$

 We have following bounds on the regularity of $V^Y$.
\begin{lemma}\label{lem:ParametricRoughPath}
 There exists $C_1=C_1(\nu),C_2=C_2(\mathbb{X})$ such that for
any integer $n\geq 0$ and all $Y\in \mathcal{D}_X$, $\tilde{Y}\in \mathcal{D}_{\tilde{X}}$,
\begin{equation}
\Vert \nabla^nV^Y\Vert_{L^{\infty}}\leq
4C_1C_2^3\Vert \nabla^{n+1}A\Vert_{C^1}\Vert Y\Vert_{\mathcal{D}_X}^2(1+\Vert Y\Vert_{\mathcal{D}_X})\label{eqn:ParametricRoughPath-1}
\end{equation}
and
\begin{multline}\label{eqn:ParametricRoughPath-2general}
\Vert \nabla^nV^Y-\nabla^nV^{\tilde{Y}}\Vert_{L^{\infty}}\leq
C(\nu)\Vert A\Vert_{C^{n+3}}C_X^4(1+\Vert Y\Vert_{\mathcal{D}_X}+\Vert \tilde{Y}\Vert_{\mathcal{D}_{\tilde{X}}})^3 D(Y,\tilde{Y})
\end{multline}
where
$$
C_X=1+\Vert X\Vert_{C^{\nu}}+\Vert \tilde{X}\Vert_{C^{\nu}}+\Vert \mathbb{X}^2\Vert_{C_2^{2\nu}}+\Vert \mathbb{\tilde{X}}^2\Vert_{C_2^{2\nu}}.
$$

In the case of $X=\tilde{X}$, inequality \eqref{eqn:ParametricRoughPath-2general} can be rewritten as
\begin{equation}
\Vert \nabla^nV^Y-\nabla^nV^{\tilde{Y}}\Vert_{L^{\infty}}\leq
16C_1C_2^3\Vert \nabla^{n+1}A\Vert_{C^2}\Vert Y\Vert_{\mathcal{D}_X}(1+\Vert Y\Vert_{\mathcal{D}_X})^2\Vert Y-\tilde{Y}\Vert_{\mathcal{D}_X}^*.
\label{eqn:ParametricRoughPath-2}
\end{equation}
\end{lemma}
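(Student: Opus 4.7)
The plan is to reduce the estimate to a single application of Lemma \ref{lem:IntegralProp} by exploiting the closed-curve hypothesis $Y(0)=Y(1)$. Set $B:=\nabla^{n}A$. Since differentiation with respect to the parameter $x$ commutes with a smooth contour integral, and since the rough integral depends continuously on its arguments by \eqref{eqn:IntegralDef-5}, Assumption \ref{ass:RoughPathApproximability} together with a standard smooth-approximation argument yields
$$
\nabla^n V^Y(x)=\intl_{S^1}B(x-Y)\,dY.
$$
Applying Lemma \ref{lem:RegularMap} with $\phi(y):=B(x-y)$, the pair $(\Phi_x,\Phi_x')$, where $\Phi_x(\xi):=B(x-Y(\xi))$ and $\Phi_x'(\xi):=-\nabla B(x-Y(\xi))Y'(\xi)$, lies in $\mathcal{D}_X$, and \eqref{eqn:RegularMap-3} provides a bound on $\Vert \Phi_x\Vert_{\mathcal{D}_X}$ in terms of $\Vert \nabla^{n+1}A\Vert_{C^1}$, $\Vert Y\Vert_{\mathcal{D}_X}$ and $\Vert X\Vert_{C^\nu}$ that is uniform in $x$.

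Parameterising $S^1$ by the interval $[0,1]$ with $Y(0)=Y(1)$, I invoke formula \eqref{eqn:IntegralDef-2} of Lemma \ref{lem:IntegralProp} with $\xi=0$ and $\eta=1$:
$$
\nabla^n V^Y(x)=\Phi_x(0)\bigl(Y(1)-Y(0)\bigr)+\Phi_x'(0)\,Y'(0)\,\mathbb{X}^2(1,0)+Q(1,0).
$$
The crucial cancellation occurs here: the first term vanishes \emph{because $Y$ is a closed loop}, leaving only the second-order correction and the remainder $Q$. The second term is bounded by $\Vert\nabla^{n+1}A\Vert_{\infty}\,\Vert Y'\Vert_{\infty}^{2}\,\Vert\mathbb{X}^2\Vert_{C_2^{2\nu}}$, while the third is bounded by $\Vert Q\Vert_{C_2^{3\nu}}$, which by \eqref{eqn:IntegralDef-3} reduces to $\Vert\Phi_x\Vert_{\mathcal{D}_X}\Vert Y\Vert_{\mathcal{D}_X}$ times a polynomial in $C_X=1+\Vert X\Vert_{C^{\nu}}+\Vert\mathbb{X}^2\Vert_{C_2^{2\nu}}$. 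Inserting the bound on $\Vert\Phi_x\Vert_{\mathcal{D}_X}$ and taking the supremum over $x$ produces \eqref{eqn:ParametricRoughPath-1}.

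For the general difference estimate \eqref{eqn:ParametricRoughPath-2general}, I perform the same decomposition on $\nabla^n V^{\tilde Y}(x)$ at the same base point and subtract term by term. The leading summands vanish identically for both curves. The $\mathbb{X}^2$-bilinear term is split by telescoping: the difference $\Phi_x'(0)-\tilde\Phi_x'(0)$ is controlled via \eqref{eqn:RegularMap-4} applied to $\phi(y)=\nabla B(x-y)$, the difference $Y'(0)-\tilde Y'(0)$ is absorbed into $\eps_Y$, and the difference $\mathbb{X}^2(1,0)-\tilde{\mathbb{X}}^2(1,0)$ contributes to $\eps_X$. The difference $Q(1,0)-\tilde Q(1,0)$ is bounded directly by \eqref{eqn:IntegralDef-5}. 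Summing the estimates and taking the supremum in $x$ yields \eqref{eqn:ParametricRoughPath-2general}. In the special case $X=\tilde X$ the $\eps_X$-contribution vanishes and the sharper bound \eqref{eqn:RegularMap-4''} replaces \eqref{eqn:RegularMap-4}, giving the factorised form \eqref{eqn:ParametricRoughPath-2}.

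The main technical point is the legitimacy of the interchange $\nabla^n_x\int=\int\nabla^n_x$ for rough integrals; this is obtained by approximating $(X,\mathbb{X}^2)$ by smooth dyadic approximations from Assumption \ref{ass:RoughPathApproximability}, for which the interchange is classical, and passing to the limit through \eqref{eqn:IntegralDef-5}. The remainder is careful bookkeeping of the polynomial dependence on $C_X$ so that the final constants match the stated $C_X^4$-factor in \eqref{eqn:ParametricRoughPath-2general}.
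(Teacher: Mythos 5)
Your proposal is correct and follows essentially the same route as the paper's proof of \eqref{eqn:ParametricRoughPath-2general}: reduce to $n=0$, expand $V^Y-V^{\tilde Y}$ via \eqref{eqn:IntegralDef-2}--\eqref{eqn:IntegralDef-4}, kill the first-order terms using $Y(1)=Y(0)$, telescope the $\mathbb{X}^2$-bilinear terms, and control $Q-\tilde Q$ by \eqref{eqn:IntegralDef-5} combined with \eqref{eqn:RegularMap-3} and \eqref{eqn:RegularMap-4}. The only difference is that the paper simply cites \eqref{eqn:ParametricRoughPath-1} and \eqref{eqn:ParametricRoughPath-2} from Lemma 7 of \cite{[Bess+Gub_2005]}, whereas you sketch their proofs by the same decomposition, and you are somewhat more explicit than the paper about justifying the interchange $\nabla^n_x\int=\int\nabla^n_x$.
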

\begin{proof}
By the results~\cite[Lemma $7$]{[Bess+Gub_2005]} it is known that $V^Y \in C^{n}$ if $A\in C^{n+2}$ and alsto that eqns.~\eqref{eqn:ParametricRoughPath-1} and
\eqref{eqn:ParametricRoughPath-2} hold. Now we will show
\eqref{eqn:ParametricRoughPath-2general}. It is enough to consider
the case of $n=0$. By eq.~\eqref{eqn:IntegralDef-3}  we have
\begin{eqnarray}
V^Y(x)-V^{\tilde{Y}}(x)&=&A(x-Y(0))(Y(1)-Y(0))
-A(x-\tilde{Y}(0))(\tilde{Y}(1)-\tilde{Y}(0))\nonumber\\
&&\quad + (A(x-Y))^\prime(0)Y^\prime(0)\mathbb{X}^2(0,1)
-(A(x-\tilde{Y}))^\prime(0)\tilde{Y}^\prime(0)\mathbb{\tilde{X}}^2(0,1)
\nonumber\\
&&\quad + Q^x(0,1)-\tilde{Q}^x(0,1),
\nonumber
\end{eqnarray}
where $Q^x$ and $\tilde{Q}^x$ (given by the eq.~\eqref{eqn:IntegralDef-3}) satisfy
inequality \eqref{eqn:IntegralDef-5}. Recall that $Y(1)=Y(0)$ and $\tilde{Y}(1)=\tilde{Y}(0)$, hence we have
\begin{equation}\label{eqn:PRaux-1}
\begin{split}
\vert V^Y-V^{\tilde{Y}}\vert_{L^{\infty}}\leq &
\supl_x\vert (A(x-Y))^\prime(0)Y^\prime(0)\mathbb{X}^2(0,1)-
(A(x-\tilde{Y}))^\prime(0)\tilde{Y}^\prime(0)\mathbb{\tilde{X}}^2(0,1)\vert \\
& +\supl_x\vert Q^x(0,1)-\tilde{Q}^x(0,1)\vert .
\end{split}
\end{equation}
For the first term on the r.h.s. we have
\begin{equation}
\label{eqn:PRaux-2}
\begin{split}
\vert (&A(x-Y))^\prime(0)Y^\prime(0)\mathbb{X}^2(0,1)-
(A(x-\tilde{Y}))^\prime(0)\tilde{Y}^\prime(0)\mathbb{\tilde{X}}^2(0,1)\vert \\
&\leq
\vert (\nabla A(x-Y(0))Y^\prime(0)Y^\prime(0)-\nabla A(x-\tilde{Y}(0))\tilde{Y}^\prime(0)\tilde{Y}^\prime(0))\mathbb{X}^2(0,1)\vert \\
&+\vert \nabla A(x-\tilde{Y}(0))\tilde{Y}^\prime(0)\tilde{Y}^\prime(0)(\mathbb{X}^2(0,1)-\mathbb{\tilde{X}}^2(0,1))\vert \\
&\leq \Vert \mathbb{X}^2\Vert_{C_2^{2\nu}}\vert \nabla A(x-Y(0))Y^\prime(0)Y^\prime(0)-\nabla A(x-\tilde{Y}(0))\tilde{Y}^\prime(0)\tilde{Y}^\prime(0)\vert \\
&+ \Vert A\Vert_{C^2}\Vert Y^\prime\Vert_{L^{\infty}}^2\Vert \mathbb{X}^2-\mathbb{\tilde{X}}^2\Vert_{C_2^{2\nu}}\\
&\leq
\Vert \mathbb{X}^2\Vert_{C_2^{2\nu}}\Vert Y^\prime\Vert_{L^{\infty}}^2\Vert A\Vert_{C^2}\Vert Y^\prime-\tilde{Y}^\prime\Vert_{L^{\infty}}
+\Vert \mathbb{X}^2\Vert_{C_2^{2\nu}}\Vert A\Vert_{C^1}(\Vert Y^\prime\Vert_{L^{\infty}}+\Vert \tilde{Y}^\prime\Vert_{L^{\infty}})\Vert Y^\prime-\tilde{Y}^\prime\Vert_{L^{\infty}}\\
&+\Vert A\Vert_{C^2}\Vert Y^\prime\Vert_{L^{\infty}}^2\Vert \mathbb{X}^2-\mathbb{\tilde{X}}^2\Vert_{C_2^{2\nu}}.
\end{split}
\end{equation}
By~\eqref{eqn:IntegralDef-5} we can estimate second term as
follows
\begin{equation}
\label{eqn:PRaux-3}
\begin{aligned}
\vert Q^x-\tilde{Q}^x\vert_{C_2^{3\nu}}&\leq
C\Big[(\Vert A(x-Y)\Vert_{\mathcal{D}_X}+\Vert A(x-\tilde{Y})\Vert_{\mathcal{D}_{\tilde{X}}})\eps_Y\nonumber\\&+
(\Vert Y\Vert_{\mathcal{D}_X}+\Vert \tilde{Y}\Vert_{\mathcal{D}_{\tilde{X}}})\eps_A+\eps_X\Big],
\end{aligned}
\end{equation}
where
\begin{align*}
\eps_Y&=\Vert Y-\tilde{Y}\Vert_{C^{\nu}}+\Vert Y^\prime-\tilde{Y}^\prime\Vert_{C^{\nu}}+\Vert R^Y-R^{\tilde{Y}}\Vert_{C_2^{2\nu}},\\
\eps_A&=\Vert A(x-Y)-A(x-\tilde{Y})\Vert_{C^{\nu}}+\Vert A(x-Y)^\prime-A(x-\tilde{Y})^\prime\Vert_{C^{\nu}}\\
&+\Vert R^{A(x-Y)}-R^{A(x-\tilde{Y})}\Vert_{C_2^{2\nu}},\\
\eps_X&=(\Vert A(x-Y)\Vert_{\mathcal{D}_X}+\Vert A(x-\tilde{Y})\Vert_{\mathcal{D}_{\tilde{X}}})\\
&\times(\Vert Y\Vert_{\mathcal{D}_X}+\Vert \tilde{Y}\Vert_{\mathcal{D}_{\tilde{X}}})
(\vert X-\tilde{X}\vert_{C^{\nu}}+\vert \mathbb{X}^2-\mathbb{\tilde{X}}^2\vert_{C_2^{2\nu}}).
\end{align*}
By formula \eqref{eqn:RegularMap-4} we can estimate $\eps_A$ as
follows
\begin{align}
 \eps_A &\leq
K\Vert A\Vert_{C^3}(1+\Vert X\Vert_{C^{\nu}}+\Vert \tilde{X}\Vert_{C^{\nu}})^3(1+\Vert Y\Vert_{D_X}+\Vert \tilde{Y}\Vert_{D_{\tilde{X}}})^2\nonumber\\
&\times(\Vert X-\tilde{X}\Vert_{C^{\nu}}+\Vert Y-\tilde{Y}\Vert_{C^{\nu}}+\Vert Y^\prime-\tilde{Y}^\prime\Vert_{C^{\nu}}+\Vert R^Y-R^{\tilde{Y}}\Vert_{C_2^{2\nu}}).\label{eqn:PRaux-4}
\end{align}
By inequality \eqref{eqn:RegularMap-3} we infer that
\begin{eqnarray}
\Vert A(x-Y)\Vert_{\mathcal{D}_X}\leq
K\Vert A\Vert_{C^2}\Vert Y\Vert_{D_X}(1+\Vert Y\Vert_{D_X})(1+\Vert X\Vert_{C^{\nu}})^2,\label{eqn:PRaux-5}
\end{eqnarray}
and similarly,
\begin{eqnarray}
\Vert A(x-\tilde{Y})\Vert_{\mathcal{D}_{\tilde{X}}}\leq
K\Vert A\Vert_{C^2}\Vert \tilde{Y}\Vert_{D_{\tilde{X}}}(1+\Vert \tilde{Y}\Vert_{D_{\tilde{X}}})(1+\Vert \tilde{X}\Vert_{C^{\nu}})^2.\label{eqn:PRaux-6}
\end{eqnarray}
Therefore, combining inequalities \eqref{eqn:PRaux-3} with \eqref{eqn:PRaux-4},
\eqref{eqn:PRaux-5} and \eqref{eqn:PRaux-6} we get
\begin{multline}\label{eqn:PRaux-7}
\Vert Q^x-\tilde{Q}^x\Vert_{C_2^{3\nu}}\leq C(\nu)\Vert A\Vert_{C^{n+3}}(1+\Vert X\Vert_{C^{\nu}}+\Vert \tilde{X}\Vert_{C^{\nu}})^4(1+\Vert Y\Vert_{\mathcal{D}_X}+\Vert \tilde{Y}\Vert_{\mathcal{D}_{\tilde{X}}})^3\\
(\Vert X-\tilde{X}\Vert_{C^{\nu}}+\Vert \mathbb{X}^2-\mathbb{\tilde{X}}^2\Vert_{C_2^{2\nu}}+\Vert Y^\prime-\tilde{Y}^\prime\Vert_{C^{\nu}}+\Vert R^Y-R^{\tilde{Y}}\Vert_{C_2^{2\nu}}+\Vert Y-\tilde{Y}\Vert_{C^{\nu}}).
\end{multline}
Hence, the result follows from \eqref{eqn:PRaux-2} and
\eqref{eqn:PRaux-7}.
\end{proof}

\section{The Evolution Problem for a Rough Filament}
Let $\mathcal{D}_{\mathbb{X},T}=C([0,T],\mathcal{D}_X)$, where the space $\mathcal{D}_X$ has been defined in Definition \ref{def-controlled},  be a
vector space with the usual supremum norm
\begin{equation}
\Vert F\Vert_{\mathcal{D}_{\mathbb{X},T}}=\supl_{t\in[0,T]}\Vert F(t)\Vert_{\mathcal{D}_X}^*\label{eqn:NormDef}.
\end{equation}
Obviously $\mathcal{D}_{\mathbb{X},T}$ is a Banach space. For an element $\gamma \in \mathcal{D}_{\mathbb{X},T}$, the time-dependent vector field
$$
(t,x)\mapsto u^{\gamma(t)}(x) =\int_0^1 \nabla \phi(x-\gamma(t,\xi)) \times d\gamma(t,\xi)
$$
is well defined by the rough integral and it is meaningful to consider the  Cauchy problem~\eqref{eqn:RandomFilam-1} in $\mathcal{D}_{\mathbb{X},T}$.
The following local existence and uniqueness result been proved in \cite{[Bess+Gub_2005]}, see Theorem
$3$,~p.1842.
\begin{theorem}\label{thm:LocalExistUniq}
Assume $\phi\in C^6(\Rnu^3,\Rnu)$, $\nu\in ({1}/{3},1)$,
$\mathbb{X}=(X,\mathbb{X}^2)$ is a $\nu$-rough path, $\gamma_0\in
\mathcal{D}_X$. Then there exists a time
$T_0=T_0(\nu,\Vert \phi\Vert_{C^5},\mathbb{X})>0$ such that the problem~\eqref{eqn:RandomFilam-1} has unique
solution in the space  $\mathcal{D}_{\gamma_0,T_0}\subset\mathcal{D}_{\mathbb{X},T_0}$. Moveover if $\mathbb{X}, \tilde{\mathbb{X}}$ are two $\nu$-rough paths, $\gamma_0\in \mathcal{D}_X$, $\tilde \gamma_0\in
\mathcal{D}_{\tilde{X}}$ and $\gamma,\tilde \gamma$  two solutions of~\eqref{eqn:RandomFilam-1} respectively in $\mathcal{D}_{\mathbb{X},T_0}$ and $\mathcal{D}_{\tilde{\mathbb{X}},\tilde T_0}$ then there exists a constant $C=C(\mathbb{X},\tilde{\mathbb{X}},\phi,\nu,\|\gamma_0\|_{\mathcal{D}_X}^*,\|\tilde \gamma_0\|_{\mathcal{D}_{\tilde{X}}}^*)$ such that
$$
\sup_{0\le t\le \min(T_0,\tilde T_0)} D(\gamma(t),\tilde \gamma(t)) \le C D(\gamma(0),\tilde \gamma(0)) .
$$
\end{theorem}
This result can be easily proven by applying the Banach  Fixed Point Theorem in the space $\mathcal{D}_{\mathbb{X},T}$  with a sufficiently small $T$. The requirements on $\phi$ can be easily understood as follows. In order for $u^\gamma$ to be well defined the function $\nabla \phi$ has to be twice differentiable. Moreover in order to apply the fixed point theorem the function $(t,\xi) \mapsto u^{\gamma(t)}(\gamma(t,\xi))$ has to belong to  $\mathcal{D}_{\mathbb{X},T}$ and this can enforced by requiring that the map $x \mapsto u^{\gamma(t)}(x)$ has to be $C^2$ in space, uniformly in time. This requires $\nabla \phi$ to be at least $4$ times differentiable.  Furthermore we need  a strict contraction of the fixed-point map which holds at the price of an additional derivative of $\nabla \phi$ ending up with the requirement that $\phi$ has to be six times differentiable. It is not obvious how to improve on this requirement for a general rough  initial data and $\nu\in(1/3,1/2)$.

The arguments used in Theorem~\ref{thm:LocalExistUniq} does not allow to have a global solution. The aim of the present paper is to enforce further structure condition on the kernel $\phi$ in order to be able to use the energy method introduced in~\cite{[Bers+Gub_2007]}.

To prove the  global existence result for the problem~\eqref{eqn:RandomFilam-1} we need the following additional hypothesis
\begin{hypothesis}\label{hyp:PhiAssumptions}
 Assume that the function $\phi$ appearing in eq.~\eqref{eqn:RandomFilam-3} has a Fourier transform $\hat\phi:\Rnu^3\to\Rnu$ which is real, even and
non-negative and satisfying the integrability condition
$$
\int_{\Rnu^3}(1+\vert k\vert^2)^2\hat{\phi}(k)dk<\infty.
$$
\end{hypothesis}
\begin{example}
The function $\phi_\mu$, $\mu>0$ defined by
$$\phi_\mu(\cdot)=\frac{1}{(\vert \cdot\vert^2+\mu^2)^{\half}}$$
is smooth and satisfies Hypothesis \ref{hyp:PhiAssumptions}, see p.6
of \cite{[Bers+Gub_2007]}. This function is also of $C^\infty$-class  so it satisfy also the assumptions of Theorem~\ref{thm:LocalExistUniq} and as a result is an explicit example which satisfy all the assumptions needed for the following global existence result.
\end{example}
\begin{theorem}\label{thm:GlobalExistUniq}
Assume that  $\gamma_0$ is a geometric $\nu$-rough path, $\phi\in
C^6(\Rnu^3,\Rnu)$ satisfies   Hypothesis
\ref{hyp:PhiAssumptions} and  $\nu\in({1}/{3},1)$. Then for every $T>0$, the problem~\eqref{eqn:RandomFilam-1} has unique
solution in $\mathcal{D}_{\gamma_0,T}$.
\end{theorem}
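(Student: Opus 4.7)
The approach is to construct the solution as a rough-path limit of smooth approximations, using a conserved Hamiltonian to obtain uniform a priori bounds on all of $[0,T]$. By Assumption \ref{ass:RoughPathApproximability}, fix a sequence of smooth closed curves $X_n\in C^\infty(S^1,\Rnu^3)$ whose canonical rough-path lifts $(X_n,\mathbb{X}_n^2)$ converge to $(X,\mathbb{X}^2)$ in the topology \eqref{eqn:RoughPathTopology}. The Berselli--Gubinelli theorem \cite{[Bers+Gub_2007]} provides a unique global smooth solution $\gamma^n\in C^\infty([0,T]\times S^1,\Rnu^3)$ of \eqref{eqn:RandomFilam-1}--\eqref{eqn:RandomFilam-3} with $\gamma^n(0)=X_n$; the desired solution will be obtained as the limit of the $\gamma^n$.

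The central ingredient is the conserved Kelvin-type energy
\begin{equation*}
E(\gamma)\;=\;\tfrac{1}{2}\int_{S^1}\!\!\int_{S^1}\phi(\gamma(s)-\gamma(r))\,\gamma'(s)\cdot\gamma'(r)\,ds\,dr.
\end{equation*}
A direct calculation, exploiting the antisymmetry of the cross product in the definition \eqref{eqn:RandomFilam-3} of $u^\gamma$, shows that $\tfrac{d}{dt}E(\gamma^n(t))=0$ along the smooth flow, hence $E(\gamma^n(t))=E(X_n)$, which is uniformly bounded in $n$ because $X_n\to X$ in $C^\nu$. By Parseval and Hypothesis \ref{hyp:PhiAssumptions}(ii),
\begin{equation*}
E(\gamma)\;=\;c\!\int_{\Rnu^3}\hat\phi(k)\,\Bigl|\int_{S^1}e^{ik\cdot\gamma(s)}\,\gamma'(s)\,ds\Bigr|^{2}\,dk\;\geq\;0,
\end{equation*}
and combining non-negativity with conservation of $E$ and the decay assumption \ref{hyp:PhiAssumptions}(iii) one aims to extract a uniform bound
\begin{equation*}
\sup_{n}\sup_{t\in[0,T]}\|\gamma^n(t)\|_{\mathcal{D}_{X_n}}^{*}\;\leq\;M(T,\mathbb{X},\phi).
\end{equation*}

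With this a priori bound in hand, the stability estimates \eqref{eqn:RegularMap-4} and \eqref{eqn:ParametricRoughPath-2general} show that $Y\mapsto u^{Y}(Y)$ is Lipschitz in the rough-path distance $|\cdot|_{D}$ on sublevel sets of the $\mathcal{D}_{X}$-norm, with Lipschitz constants depending only on $M$. Inserting this into the mild formulation of \eqref{eqn:RandomFilam-1}--\eqref{eqn:RandomFilam-3} and applying a Gronwall argument on $[0,T]$, together with $(X_n,\mathbb{X}_n^2)\to(X,\mathbb{X}^2)$, shows that $(\gamma^n)$ is Cauchy in $\mathcal{D}_{\mathbb{X},T}$; the limit $\gamma$ solves the equation with initial datum $(X,\mathbb{X}^2)$. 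Uniqueness follows from Theorem \ref{thm:LocalExistUniq} iterated over a finite partition of $[0,T]$, with each subinterval of length bounded below by $T_0(\nu,|\phi|_{C^5},M)$ thanks to the uniform bound.

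The main obstacle is the passage from the conserved energy $E(\gamma^n(t))$ to a bound on $\|\gamma^n(t)\|_{\mathcal{D}_{X_n}}^{*}$: the energy is naturally a weighted $L^{2}$-type quantity, whereas the target norm encodes $C^{\nu}$-regularity of the curve, of its Gubinelli derivative $(\gamma^n(t))'$ and of the remainder $R^{\gamma^n(t)}$. One has to exploit both $\hat\phi\geq 0$ and $\int(1+|k|^{2})^{2}\hat\phi(k)\,dk<\infty$ to interpolate Fourier-side estimates into H\"older-type estimates on $\gamma^n(t)$, and to control the induced area process analogously. Everything else --- the stability, the passage to the limit, and the local-existence patching --- is routine given the machinery of Sections 1--2.
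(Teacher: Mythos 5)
Your overall architecture differs from the paper's: you build the solution as a limit of the smooth global solutions $\gamma^n$ of Berselli--Gubinelli, whereas the paper takes the maximal local solution $\gamma:[0,T^*)\to\mathcal{D}_{\gamma_0}$ of the rough problem (from Theorem \ref{thm:LocalExistUniq}) and rules out blow-up; the smooth approximation enters the paper only to prove that the energy is conserved along the rough solution (Lemma \ref{lem:EnergyMotionIntegral}) and that it is non-negative (Corollary \ref{cor:ContinuityEnergy}). Either architecture could be made to work, but both hinge on the same a priori estimate, and that is exactly where your argument has a genuine gap.

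You correctly identify the crux --- passing from the conserved, $L^2$-type energy to a bound on $\Vert\gamma^n(t)\Vert_{\mathcal{D}_{X_n}}^*$ --- but the mechanism you propose for it, ``interpolating Fourier-side estimates into H\"older-type estimates on $\gamma^n(t)$,'' is not the right one and would fail: at a fixed time the energy of a curve does not control its $C^\nu$ norm, its Gubinelli derivative, or its remainder (a curve of bounded energy can have arbitrarily large H\"older norm). The estimate is dynamic, not static. The correct chain, which the paper carries out, is: Hypothesis \ref{hyp:PhiAssumptions} together with the Fourier representation gives $\Vert\nabla^n u^{\gamma}\Vert_{L^\infty}\leq C_n\,\mathcal{H}_X^{1/2}(\gamma)$ (Lemma \ref{lem:VelocitySmoothness}), i.e.\ the conserved energy controls all sup-norms of derivatives of the \emph{velocity field}; one then differentiates the mild formulation to get evolution equations for $\gamma$, $\gamma'$ and $R^{\gamma}$ (using the composition formula \eqref{eqn:RegularMap-2} for the remainder), in which $\nabla u^{\gamma(s)}$ and $\nabla^2 u^{\gamma(s)}$ appear as coefficients bounded by $C\,\mathcal{H}_{\gamma_0}^{1/2}(\gamma_0)$, and Gronwall's inequality yields exponential-in-time bounds \eqref{eqn:GEUEstim-1}--\eqref{eqn:GEUEstim-4} on $\vert\gamma(t)\vert_{C^\nu}$, $\vert\gamma'(t)\vert_{C^\nu}$ and $\vert R^{\gamma(t)}\vert_{2\nu}$, hence on the full controlled-path norm on any finite interval. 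Without this step your uniform bound $M(T,\mathbb{X},\phi)$ is unsubstantiated, and everything downstream (the Cauchy argument and the patching of local solutions) rests on it.
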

We will need the following definition.
\begin{definition}
Let $\phi\in C^4(\Rnu^3,\Rnu)$, $\gamma\in \mathcal{D}_X$. Let
\begin{equation*}
\psi^{\gamma}(x)=\int_0^1\phi(x-\gamma(\eta))d\gamma(\eta).\label{eqn:EnergyIntegralDef-1a}
\end{equation*}
and
\begin{equation}
\mathcal{H}_X^{\phi}(\gamma)=\frac{1}{2}\int_0^1\psi^{\gamma}(\gamma(\xi))\cdot
d\gamma(\xi) = \frac{1}{2}\int_0^1 \int_0^1 \phi(\gamma(\xi)-\gamma(\eta))
(d\gamma(\xi)\cdot d\gamma(\eta)).
\label{eqn:EnergyIntegralDef-1}
\end{equation}
The quantity
$\mathcal{H}_X^{\phi}(\gamma)$ is called the energy of path $\gamma$. We will omit $\phi$ below.
\end{definition}
\begin{remark}\label{rem:CorrectEnergyDef}
Definition
\eqref{eqn:EnergyIntegralDef-1}
is well posed. Indeed, by Lemma \ref{lem:ParametricRoughPath}
$\psi^{\gamma}\in C^2(\Rnu^3,\Rnu^3)$ and, therefore, it
follows by Lemma \ref{lem:RegularMap} that
$\psi^{\gamma} \circ\gamma\in \mathcal{D}_X$.
Moreover, if $\nu>{1}/{2}$ and $\gamma\in C^1(\mathbb{T},\Rnu^3)$, 
then by Remark \ref{rem:SmoothCase} the line integrals in the
definition of the energy are understood in the sense of Young.
\end{remark}
\begin{lemma}\label{lem:EnergyProperties}
Assume that $\phi\in C^4(\Rnu^3,\Rnu)$. Then there exists a constant
$C=C(\nu,\mathbb{X})$ such that for all $\gamma\in \mathcal{D}_X$
\begin{equation}
\vert \mathcal{H}_X(\gamma)\vert \leq
C\Vert \phi\Vert_{C^4}\Vert \gamma\Vert_{\mathcal{D}_X}^4(1+\Vert \gamma\Vert_{\mathcal{D}_X})^2.\label{eqn:EnergyEstimAbove}
\end{equation}
Furthermore, for any $R>0$ there exists $C=C(R)$ such that for any
$\gamma\in\mathcal{D}_X$,
$\tilde{\gamma}\in\mathcal{D}_{\tilde{X}}$ satisfying
$$
\Vert \gamma\Vert_{\mathcal{D}_X}\leq R,
\Vert \tilde{\gamma}\Vert_{\mathcal{D}_{\tilde{X}}}\leq R,
$$
$$
C_{X,\tilde{X}}=\Vert X\Vert_{C^{\nu}}+\Vert \tilde{X}\Vert_{C^{\nu}}+\Vert \mathbb{X}^2\Vert_{C_2^{2\nu}}+\Vert \mathbb{\tilde{X}}^2\Vert_{C_2^{2\nu}}<R
$$
we have
\begin{equation}
\vert \mathcal{H}_X(\gamma)-\mathcal{H}_{\tilde{X}}(\tilde{\gamma})\vert \leq
C(R)D(\gamma,\tilde{\gamma}).\label{eqn:LocLipshitz-2}
\end{equation}
In particular the map $\mathcal{H}_X:\mathcal{D}_X\to\Rnu$ is 
Lipshitz on balls,  i.e. for any $R>0$ there exists $C=C(R)$ such that for
any $\gamma,\tilde{\gamma}\in\mathcal{D}_X$: 
$\Vert \gamma\Vert_{\mathcal{D}_X}\leq R$,  $\Vert \tilde{\gamma}\Vert_{\mathcal{D}_X}\leq R$
we have
\begin{equation}
\vert \mathcal{H}_X(\gamma)-\mathcal{H}_X(\tilde{\gamma})\vert \leq
C(R)\Vert \gamma-\tilde{\gamma}\Vert_{\mathcal{D}_X}^*.\label{eqn:LocLipshitz}
\end{equation}
\end{lemma}
\begin{proof}
First we will show inequality \eqref{eqn:EnergyEstimAbove}. By
representation \eqref{eqn:IntegralDef-2} we have
\begin{align}
\mathcal{H}_X(\gamma)&=\frac{1}{2}(\psi^{\gamma}(\gamma(0))(\gamma(1)-\gamma(0)))
+\left[\nabla\psi^{\gamma}(\gamma(0))\gamma^\prime(0)\right]\gamma^\prime(0)\mathbb{X}^2(1,0)+Q(0,1)\nonumber\\
&=I+II+III . 
\end{align}
Since $\gamma(1)=\gamma(0)$ we infer that $I=0$.
Concerning the second term by Lemma \ref{lem:ParametricRoughPath}
we have the following estimate
\begin{align}
\vert II\vert &\leq
\Vert \mathbb{X}^2\Vert_{C_2^{2\nu}}\Vert \nabla\psi^\gamma\Vert_{L^{\infty}}\Vert \gamma^\prime\Vert_{L^{\infty}}^2
\leq
C(\nu,\mathbb{X})\Vert \phi\Vert_{C^3}\Vert \gamma\Vert_{\mathcal{D}_X}^4(1+\Vert \gamma\Vert_{\mathcal{D}_X}).\label{eqn:EnergyEstimAbove-1}
\end{align}
For third term,  from inequality \eqref{eqn:IntegralDef-3}  we infer that 
\begin{equation}
\vert III\vert \leq \Vert Q\Vert_{C_2^{3\nu}}\leq
C(\nu,\mathbb{X})\Vert \psi^\gamma(\gamma)\Vert_{\mathcal{D}_X}\Vert \gamma\Vert_{\mathcal{D}_X}.\label{eqn:EnergyEstimAbove-2}
\end{equation}
Then by Lemmata \ref{lem:RegularMap} and
\ref{lem:ParametricRoughPath} we have
\begin{align}
\Vert \psi^{\gamma}(\gamma)\Vert_{\mathcal{D}_X}&\leq
C(\nu,\mathbb{X})\Vert \psi^{\gamma}\Vert_{C^2}\Vert \gamma\Vert_{\mathcal{D}_X}(1+\Vert \gamma\Vert_{\mathcal{D}_X})\nonumber\\
&\leq C(\nu,\mathbb{X})\Vert \phi\Vert_{C^4}\Vert \gamma\Vert_{\mathcal{D}_X}^3(1+\Vert \gamma\Vert_{\mathcal{D}_X})^2 . \label{eqn:EnergyEstimAbove-3}
\end{align}
Combining inequalities \eqref{eqn:EnergyEstimAbove-1},
\eqref{eqn:EnergyEstimAbove-2} and~\eqref{eqn:EnergyEstimAbove-3}
we get inequality~\eqref{eqn:EnergyEstimAbove}.
To prove  inequality~\eqref{eqn:LocLipshitz-2} we start by formula~\eqref{eqn:IntegralDef-3} to get
\begin{align}\label{eqn:EnergyLocLip-1}
\mathcal{H}_X(\gamma)-\mathcal{H}_{\tilde{X}}(\tilde{\gamma})&=\frac{1}{2}
\Big[(\nabla\psi^{\gamma}(\gamma(0))\gamma^\prime(0)\gamma^\prime(0)-\nabla\psi^{\tilde{\gamma}}(\tilde{\gamma}(0))\tilde{\gamma}^\prime(0)\tilde{\gamma}^\prime(0))
\mathbb{X}^2(1,0)\nonumber\\
&+\nabla\psi^{\tilde{\gamma}}(\tilde{\gamma}(0))\tilde{\gamma}^\prime(0)\tilde{\gamma}^\prime(0)(\mathbb{X}^2(1,0)-\mathbb{\tilde{X}}^2(1,0))
+Q(0,1)-\tilde{Q}(0,1)\Big]\!
\nonumber\\
 &=:\!I\! +\! II\! +\! III
\end{align}
The first term in~\eqref{eqn:EnergyLocLip-1} can be represented as
follows
\begin{eqnarray}
I &=&
(\nabla\psi^{\gamma}(\gamma(0))\gamma^\prime(0)\gamma^\prime(0)-\nabla\psi^{\tilde{\gamma}}(\tilde{\gamma}(0))\tilde{\gamma}^\prime(0)\tilde{\gamma}^\prime(0))
\mathbb{X}^2(1,0)\nonumber\\
 &=& \left[(\nabla\psi^{\gamma}(\gamma(0))-\nabla\psi^{\tilde{\gamma}}(\tilde{\gamma}(0)))\gamma^\prime(0)\gamma^\prime(0)\right.\nonumber\\
 &+& \nabla\psi^{\tilde{\gamma}}(\tilde{\gamma}(0))(\gamma^\prime(0)-\tilde{\gamma}(0))\gamma^\prime(0)\nonumber\\
 &+& \left.\nabla\psi^{\tilde{\gamma}}(\tilde{\gamma}(0))\tilde{\gamma}(0)(\gamma^\prime(0)-\tilde{\gamma}(0))\right]\mathbb{X}^2(1,0)=A+B+C ,\label{eqn:EnergyLocLip-2}
\end{eqnarray}
and the first term in~\eqref{eqn:EnergyLocLip-2} can be estimated as
follows
\begin{eqnarray}\label{eqn:EnergyLocLip-3}
\vert A\vert &=&\vert (\nabla\psi^{\gamma}(\gamma(0))-\nabla\psi^{\tilde{\gamma}}(\tilde{\gamma}(0)))\gamma^\prime(0)\gamma^\prime(0)\mathbb{X}^2(1,0)\vert \\
&\leq&
\Vert \mathbb{X}^2\Vert_{C_2^{2\nu}}\Vert \gamma\Vert_{\mathcal{D}_X}^2(\vert \nabla\psi^{\gamma}
(\gamma(0))\nonumber\\
&-&\nabla\psi^{\gamma}(\tilde{\gamma}(0))\vert +\vert \nabla\psi^{\gamma}(\tilde{\gamma}(0))-\nabla\psi^{\tilde{\gamma}}(\tilde{\gamma}(0))\vert )\nonumber\\
&\leq &\Vert \mathbb{X}^2\Vert_{C_2^{2\nu}}\Vert \gamma\Vert_{\mathcal{D}_X}^2(\Vert \psi^{\gamma}\Vert_{C^2}\vert \gamma(0)-\tilde{\gamma}(0)\vert
\nonumber\\
&+&C_X^4\Vert \phi\Vert_{C^4}(1+\Vert \gamma\Vert_{\mathcal{D}_X}+\Vert \tilde{\gamma}\Vert_{\mathcal{D}_{\tilde{X}}})^3 D( \gamma,\tilde{\gamma})
\nonumber\\
&\leq&
KC_X^4\Vert \phi\Vert_{C^4}(1+\Vert \gamma\Vert_{\mathcal{D}_X}+\Vert \tilde{\gamma}\Vert_{\mathcal{D}_{\tilde{X}}})^3 D( \gamma,\tilde{\gamma}).
\nonumber
\end{eqnarray}
Here the second inequality follows from inequality
\eqref{eqn:ParametricRoughPath-2general} and the third one from
inequality \eqref{eqn:ParametricRoughPath-1}. For second term in~\eqref{eqn:EnergyLocLip-2} we
have by inequality \eqref{eqn:ParametricRoughPath-1}
\begin{eqnarray}
\vert B\vert &\leq&
C\Vert \mathbb{X}^2\Vert_{C_2^{2\nu}}\Vert \gamma\Vert_{\mathcal{D}_X}\Vert \phi\Vert_{C^3}\Vert \tilde{\gamma}\Vert_{\mathcal{D}_X}^2
(1+\Vert \tilde{\gamma}\Vert_{\mathcal{D}_X})D( \gamma,\tilde{\gamma})\nonumber
\\
&\leq& CC_X(1+\Vert \gamma\Vert_{\mathcal{D}_X}+\Vert \tilde{\gamma}\Vert_{\mathcal{D}_{\tilde{X}}})^3 D( \gamma,\tilde{\gamma}).
\label{eqn:EnergyLocLip-4}
\end{eqnarray}
Similarly, we have for third term
\begin{eqnarray}
\vert C\vert \leq
C(\nu,\mathbb{X},\Vert \gamma\Vert_{\mathcal{D}_X},\Vert \tilde{\gamma}\Vert_{\mathcal{D}_X})D( \gamma,\tilde{\gamma}).\label{eqn:EnergyLocLip-5}
\end{eqnarray}
Going back to the term $II$ in~\eqref{eqn:EnergyLocLip-1} we observe that it can be estimated as
follows
\begin{align*}
\vert II\vert &\leq
\Vert \nabla\psi^{\tilde{\gamma}}\Vert_{L^{\infty}}\Vert \tilde{\gamma}\Vert_{\mathcal{D}_X}^2D( \gamma,\tilde{\gamma})
\leq
C_X^3\Vert \phi\Vert_{C^3}(1+\Vert \gamma\Vert_{\mathcal{D}_X}+\Vert \tilde{\gamma}\Vert_{\mathcal{D}_{\tilde{X}}})^3D( \gamma,\tilde{\gamma}).
\end{align*}
Thus it remains to estimate third term of equality
\eqref{eqn:EnergyLocLip-1}. By inequality~\eqref{eqn:IntegralDef-5} we have
\begin{align*}
\vert Q(0,1)-\tilde{Q}(0,1)\vert &\leq \Vert Q-\tilde{Q}\Vert_{C_2^{3\nu}}\\
&\leq
C_X\Big[(\Vert \psi^{\tilde{\gamma}}(\tilde{\gamma})\Vert_{\mathcal{D}_{\tilde{X}}}+
\Vert \psi^{\gamma}(\gamma)\Vert_{\mathcal{D}_X})D( \gamma,\tilde{\gamma})\\
&+(\Vert \tilde{\gamma}\Vert_{\mathcal{D}_{\tilde{X}}}+\Vert \tilde{\gamma}\Vert_{\mathcal{D}_{\tilde{X}}})D( \psi^{\tilde{\gamma}}(\tilde{\gamma}), \psi^{\gamma}(\gamma))\\
&+(\Vert \psi^{\tilde{\gamma}}(\tilde{\gamma})\Vert_{\mathcal{D}_{\tilde{X}}}+
\Vert \psi^{\gamma}(\gamma)\Vert_{\mathcal{D}_X})(\Vert \tilde{\gamma}\Vert_{\mathcal{D}_{\tilde{X}}}+\Vert \tilde{\gamma}\Vert_{\mathcal{D}_{\tilde{X}}})\\
&\times(\Vert X-\tilde{X}\Vert_{C^{\nu}}+\Vert \mathbb{X}^2-\mathbb{\tilde{X}}^2\Vert_{C_2^{2\nu}})\Big]
\end{align*}
By
inequality~\eqref{eqn:ParametricRoughPath-1}, the term $\Vert \psi^{\tilde{\gamma}}(\tilde{\gamma})\Vert_{\mathcal{D}_X}$ is
bounded by the constant
$C=C(\nu,\mathbb{X},\Vert \tilde{\gamma}\Vert_{\mathcal{D}_X})$. Therefore, to prove
estimate~\eqref{eqn:LocLipshitz} it is enough to show that there
exists a constant $C=C(\nu,\mathbb{X},R)$ such that for
$\gamma,\tilde{\gamma}\in\mathcal{D}_X$ with
$\Vert \gamma\Vert_{\mathcal{D}_X},\Vert \tilde{\gamma}\Vert_{\mathcal{D}_X}\leq R$
\begin{equation}
D( \psi^{\tilde{\gamma}}(\tilde{\gamma}), \psi^{\gamma}(\gamma)) \leq
CD( \gamma,\tilde{\gamma}).\label{eqn:EnergyLocLip-6}
\end{equation}
By the triangle inequality we have
\begin{align}
D( \psi^{\tilde{\gamma}}(\tilde{\gamma}), \psi^{\gamma}(\gamma))
&\leq D( \psi^{\tilde{\gamma}}(\tilde{\gamma}), \psi^{\tilde{\gamma}}(\gamma)) + D( \psi^{\tilde{\gamma}}({\gamma}), \psi^{\gamma}(\gamma))
\nonumber\\
&=I+II.\label{eqn:EnergyLocLip-7}
\end{align}
The first term can be estimated by using inequality
\eqref{eqn:RegularMap-4} as follows
\begin{equation}
\vert I\vert \leq
KC_X^3\Vert \psi^{\tilde{\gamma}}\Vert_{C^3}(1+\Vert \tilde{\gamma}\Vert_{\mathcal{D}_{\tilde{X}}}+
\Vert \gamma\Vert_{\mathcal{D}_X})^2 D( \gamma,\tilde{\gamma}).\label{eqn:EnergyLocLip-8}
\end{equation}
By inequality \eqref{eqn:ParametricRoughPath-1} we have
\begin{equation}
\Vert \psi^{\tilde{\gamma}}\Vert_{C^3}\leq
C\Vert \phi\Vert_{C^5}\Vert \tilde{\gamma}\Vert_{\mathcal{D}_X}^2(1+\Vert \tilde{\gamma}\Vert_{\mathcal{D}_X})\label{eqn:EnergyLocLip-8'}
\end{equation}
Combining inequalities \eqref{eqn:EnergyLocLip-8} and
\eqref{eqn:EnergyLocLip-8'} we obtain the necessary estimate for $I$. It
remains to find an estimate for the term $II$. By inequalities
\eqref{eqn:RegularMap-3} and \eqref{eqn:ParametricRoughPath-2} we
have
\begin{align}
II&=D( \psi^{\tilde{\gamma}}({\gamma}), \psi^{\gamma}(\gamma))
\leq
(1+\Vert X\Vert_{\nu})\Vert \psi^{\tilde{\gamma}}(\gamma)-\psi^{\gamma}(\gamma)\Vert_{\mathcal{D}_X}\nonumber\\
&\leq K\Vert \nabla\psi^{\tilde{\gamma}}-\nabla\psi^{\gamma}\Vert_{C^1}
\Vert \gamma\Vert_{\mathcal{D}_X}(1+\Vert \gamma\Vert_{\mathcal{D}_X})(1+\Vert X\Vert_{\nu})^3
\nonumber\\
&\leq K\Vert \phi\Vert_{C^5}C_X^7(1+\Vert \gamma\Vert_{\mathcal{D}_X}+\Vert \tilde{\gamma}\Vert_{\mathcal{D}_{\tilde{X}}})^5
D( \gamma,\tilde{\gamma}).\label{eqn:EnergyLocLip-9}
\end{align}
Hence the inequality \eqref{eqn:LocLipshitz-2} follows. Finally the bound~\eqref{eqn:LocLipshitz} is a consequence of eq.~\eqref{eqn:LocLipshitz-2}.
\end{proof}
Let us recall the definition \eqref{eqn:RandomFilam-3} of the vector field $u^\gamma$ generated by a controlled path $\gamma$:
\begin{equation}
u^\gamma(x)=\int_0^1\nabla\phi(x-\gamma(\xi))\times d \gamma(\xi),\qquad \gamma\in
\mathcal{D}_X.\label{eqn:RandomFilam-3'}
\end{equation}
Now we will show that if the energy functional of $\gamma$ is bounded then the
associated velocity field  is a smooth function. We have
\begin{lemma}
\label{lem:VelocitySmoothness}
Assume that $\phi\in C^4(\Rnu^3,\Rnu)$ and Hypothesis~\ref{ass:RoughPathApproximability} holds. Then the energy function
$\mathcal{H}_X:\mathcal{D}_X\to\Rnu$ is  continuous. Furthermore if $\gamma$ is a geometric rough path then
\begin{equation}
\label{eq:energy-representation}
\mathcal{H}_X(\gamma) =\frac{1}{(2\pi)^3}\int_{\Rnu^3}\hat{\phi}(k)\Big\vert \int_0^1 e^{i(k,\gamma(\xi))}d\gamma(\xi)\Big\vert^2dk \ge 0.
\end{equation}
Moreover,  if in addition the integral
$\int_{\Rnu^3}\vert {k}\vert^{2(1+n)}\hat{\phi}({k})d{k}$ is
finite and $\phi\in C^{n+4}(\Rnu^3,\Rnu^3)$, then  for any $n\in \mathbb{N}0$, we have following bound
\begin{equation}
\Vert \nabla^n u^{\gamma}\Vert^2_{L^{\infty}}\leq
\frac{1}{(2\pi)^{3}}\left[\int_{\Rnu^3}\vert {k}\vert^{2(1+n)}\hat{\phi}({k})d{k}\right]\mathcal{H}_X(\gamma),\qquad
\gamma\in\mathcal{D}_X.\label{eqn:VelocitySmoothness}
\end{equation}
provided that 
\end{lemma}
\begin{proof}
For a smooth curve $\gamma$ Lemma~\ref{lem:VelocitySmoothness} has been proved in
\cite[Lemma~3]{[Bers+Gub_2007]}. In the general case, when
$\gamma\in\mathcal{D}_X$, it is enough to notice that both sides
of eq.~\eqref{eq:energy-representation} and of inequality~\eqref{eqn:VelocitySmoothness} are locally Lipshitz
and therefore, continuous w.r.t. distance
$D(Y,\tilde{Y})$, $Y\in\mathcal{D}_X$,
$\tilde{Y}\in\mathcal{D}_{\tilde{X}}$ defiend in \eqref{eqn-distance}. Indeed the continuity of
$\mathcal{H}_X$  and continuity of $\Vert \nabla^n
u^{\gamma}\Vert_{L^{\infty}}$ readily follows from Lemma
\ref{lem:ParametricRoughPath}.
\end{proof}

Now we are going to show that energy is a local integral of motion
for problem~\eqref{eqn:RandomFilam-1}.
\begin{lemma}\label{lem:EnergyMotionIntegral}
Let $\gamma_0$ a geometric $\nu$-rough path and $\gamma\in\mathcal{D}_{\mathbb{\gamma}_0,T_0}$ be a local solution of
problem~\eqref{eqn:RandomFilam-1} (such a solution exists by Theorem \ref{thm:LocalExistUniq}). Then
$$
\frac{d\mathcal{H}_{\gamma_0}(\gamma(s))}{ds}=0,\qquad s\in[0,T_0).
$$
\end{lemma}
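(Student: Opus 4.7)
The plan is to reduce to the smooth case by approximation. Using Assumption \ref{ass:RoughPathApproximability}, choose smooth loops $X_n \in C^\infty(S^1,\mathbb{R}^3)$ with canonical iterated integrals $\mathbb{X}_n^2$ converging to $(\gamma_0,\mathbb{X}^2)$ in the rough path topology \eqref{eqn:RoughPathTopology}. By the global existence result of \cite{[Bers+Gub_2007]} for smooth initial data, the filament equation with initial datum $X_n$ admits a smooth solution $\gamma_n$ defined for all positive times. Moreover, since the local existence time furnished by Theorem \ref{thm:LocalExistUniq} depends only on $\nu$, $|\phi|_{C^5}$ and the rough-path norm, for $n$ large enough $\gamma_n$ is the unique solution of \eqref{eqn:RandomFilam-1}--\eqref{eqn:RandomFilam-3} in $\mathcal{D}_{X_n,T_0}$. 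A continuous dependence statement, which follows from the Lipschitz estimates of Lemmas \ref{lem:RegularMap}, \ref{lem:IntegralProp} and \ref{lem:ParametricRoughPath} combined with the contraction-mapping argument behind Theorem \ref{thm:LocalExistUniq} and a Gronwall closure, then yields $|\gamma_n(s)-\gamma(s)|_D \to 0$ for every fixed $s \in [0,T_0)$.

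For the smooth approximants, energy conservation follows from a direct classical computation. Starting from the representation \eqref{eqn:EnergyIntegralDef-2}, differentiating in $s$, and exploiting the symmetry $\xi\leftrightarrow\eta$ together with the evenness of $\phi$, the terms doubling up and integrating by parts in $\xi$ on the contribution containing $\dot\gamma_n'(\xi)$ yields
\begin{equation*}
\frac{d\mathcal{H}_{X_n}(\gamma_n(s))}{ds} = \int\!\!\!\int \bigl\{ [\nabla\phi\cdot\dot\gamma_n(\xi)]\,\gamma_n'(\xi)\cdot\gamma_n'(\eta) - [\nabla\phi\cdot\gamma_n'(\xi)]\,\dot\gamma_n(\xi)\cdot\gamma_n'(\eta)\bigr\}\,d\xi\,d\eta,
\end{equation*}
with $\nabla\phi$ evaluated at $\gamma_n(\xi)-\gamma_n(\eta)$. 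Applying the BAC--CAB identity $a\times(b\times c) = b(a\cdot c) - c(a\cdot b)$ to the factor in braces (viewed as a vector paired with $\dot\gamma_n(\xi)$) and then performing the $\eta$-integral identifies the right-hand side with
\begin{equation*}
\int_{S^1} \dot\gamma_n(\xi) \cdot \bigl[\gamma_n'(\xi)\times u^{\gamma_n(s)}(\gamma_n(\xi))\bigr]\,d\xi,
\end{equation*}
which vanishes upon substituting the equation of motion $\dot\gamma_n(\xi) = u^{\gamma_n(s)}(\gamma_n(\xi))$ and using $B\cdot(A\times B) = 0$. Hence $\mathcal{H}_{X_n}(\gamma_n(s)) = \mathcal{H}_{X_n}(X_n)$ for every $n$ and every $s$.

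Finally we pass to the limit. Inequality \eqref{eqn:LocLipshitz-2} of Lemma \ref{lem:EnergyProperties} asserts that $\mathcal{H}$ is jointly locally Lipschitz in the underlying rough path and in the weakly controlled curve, so the convergence $|\gamma_n(s)-\gamma(s)|_D\to 0$ (which in particular carries a uniform $\mathcal{D}$-bound) gives $\mathcal{H}_{X_n}(\gamma_n(s))\to\mathcal{H}_{\gamma_0}(\gamma(s))$ and, specializing $s=0$, $\mathcal{H}_{X_n}(X_n)\to\mathcal{H}_{\gamma_0}(\gamma_0)$. Combining these limits with the equality of the previous paragraph yields $\mathcal{H}_{\gamma_0}(\gamma(s)) = \mathcal{H}_{\gamma_0}(\gamma_0)$ for every $s\in[0,T_0)$, from which the lemma follows. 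The main obstacle lies in the first paragraph: establishing continuous dependence of the rough-path solution on the initial rough-path data is quantitative but unavoidable, and is where the combined strength of Lemmas \ref{lem:RegularMap}--\ref{lem:ParametricRoughPath} is used; the smooth computation of the middle paragraph is straightforward once that convergence is in place.
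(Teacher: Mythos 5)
Your proof is correct and follows essentially the same route as the paper: approximate $\gamma_0$ by smooth geometric rough paths, invoke global existence and energy conservation for the smooth solutions, and pass to the limit using continuity of the solution map and of $\mathcal{H}$. The only difference is that where the paper simply cites Theorem 4 and Lemma 2 of \cite{[Bers+Gub_2007]} for the continuous dependence on initial data and the smooth-case conservation, you sketch these two facts directly; your classical computation in the middle paragraph is the content of that Lemma 2.
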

\begin{proof}
Since $\gamma(0)=\gamma_0\in \mathcal{D}_{\gamma_0}$ is a geometric rough path (we will denote its area component by $\Gamma_0$)
there exist sequence $\{\gamma_0^n\}_{n=1}^{\infty}$ of piecewise smooth closed curves in $\Rnu^3$ such that if we denote by $(\gamma_0^n,\Gamma_0^n)$ their canonical lift to the space of $\nu$-rough paths we have
\[
\Vert \gamma_0^n-\gamma_0\Vert_{C^{\nu}}+\Vert \Gamma^n_0-{\Gamma}_0\Vert_{C_2^{2\nu}}\to 0,\qquad n\to\infty.
\]
Now observe that $\gamma_0^n\in \mathcal{D}_{\gamma_0^n}$, $\gamma_0\in \mathcal{D}_{\gamma_0}$ since we can take
$(\gamma_0^n)^\prime=(\gamma_0)^\prime=1$ and $R^{\gamma_0^n}=R^{\gamma_0}=0$.
Hence we deduce that
$$
D( \gamma_0^n,\gamma_0)\to 0,\qquad n\to\infty.
$$
Denote by $\gamma^n\in C([0,\infty),\mathbf{H}^1(\mathbb{T},\Rnu^3))$ the
global solution of problem~\eqref{eqn:RandomFilam-1} with initial
condition $\gamma_0^n$. Existence of such solution has been proved in
Theorem $2$ of \cite{[Bers+Gub_2007]}. Moreover note that for smooth functions $\gamma^n$ controlled by the rough path $(\gamma_0^n,\Gamma_0^n)$ the integral defined via rough paths coincide with the standard Lebesgue integral and the solution of the Cauchy problem in $C([0,\infty),\mathbf{H}^1(\mathbb{T},\Rnu^3))$ with the rough solution whose local existence is stated in Theorem~\ref{thm:LocalExistUniq}.   The locally Lipshitz dependence of the rough solution on  the initial data  stated in Theorem~\ref{thm:LocalExistUniq} implies that
$$
\lim_{n\to \infty} \supl_{t\in[0,T_0]}D( \gamma^n(t),\gamma(t)) \lesssim  \lim_{n\to \infty}  D( \gamma^n(0),\gamma(0))  =  0
$$
for a stricly positive $T_0$ which depends only on the rough path norm of $\gamma_0$.
Therefore, by the continuity of the energy functional $\mathcal{H}_{\gamma_0}$ we
have
\begin{equation}
\mathcal{H}_{\gamma_0}(\gamma(s))=\liml_{n\to\infty}\mathcal{H}_{\gamma_0^n}(\gamma^n(s)),\qquad s\in[0,T_0].\label{eqn:lemEMIaux-1}
\end{equation}
Furthermore, by Lemma $2$ of \cite{[Bers+Gub_2007]}, we
have
\begin{equation}
\mathcal{H}_{\gamma_0^n}(\gamma^n(s))=\mathcal{H}_{\gamma_0^n}(\gamma_0^n),\qquad s\in[0,T_0].\label{eqn:lemEMIaux-2}
\end{equation}
As a result, combining inequalities \eqref{eqn:lemEMIaux-1} and
\eqref{eqn:lemEMIaux-2} we get the statement of the Lemma.
\end{proof}
Now we are ready to prove Theorem \ref{thm:GlobalExistUniq}.
\begin{proof}[Proof of Theorem \ref{thm:GlobalExistUniq}]
According to Theorem \ref{thm:LocalExistUniq} there exists a unique
local solution of problem~\eqref{eqn:RandomFilam-1}. Then, we can
find $T^*>0$ and a  unique maximal local solution 
$\gamma:[0,T^*)\to \mathcal{D}_{\gamma_0}$ which then satisfies
\begin{equation}
\liml_{t\nearrow T^*}\Vert \gamma(t)\Vert_{\mathcal{D}_{\gamma_0}}=\infty.\label{eqn:thmGEUaux-1}
\end{equation}
We need
to show that $T^*=\infty$. Therefore, it is enough to prove
$$
\supl_{t\in[0,T^*)}\Vert \gamma(t)\Vert_{\mathcal{D}_{\gamma_0}}<\infty.
$$
Indeed, by contradiction with \eqref{eqn:thmGEUaux-1}, the result
will follow. In the rest of the proof we show such estimate.
Notice that we will have
\begin{equation}
{\mathcal{H}_{\gamma_0}(\gamma(s))}={\mathcal{H}_{\gamma_0}(\gamma_0)},\qquad s\in[0,T^*),\label{eqn:EnergyMotionIntegral-1}
\end{equation}
and
recall that
\begin{equation}
\gamma(t)=\gamma_0+\int_0^tu^{\gamma(s)}(\gamma(s))ds.\label{eqn:RandomFilam-12}
\end{equation}
Firstly we have
\begin{align}
\Vert \gamma(t)\Vert_{L^{\infty}}&\leq\Vert \gamma_0\Vert_{L^{\infty}}+\int_0^t\Vert u^{\gamma(s)}\Vert_{L^{\infty}}ds
\leq\Vert \gamma_0\Vert_{L^{\infty}}+C\int_0^t\mathcal{H}_{\gamma_0}^{1/2}(\gamma(s))ds\nonumber\\
&\leq\Vert \gamma_0\Vert_{L^{\infty}}+C\mathcal{H}_{\gamma_0}^{1/2}(\gamma_0)t,\qquad t\in[0,T^*).\label{eqn:GEUEstim-1}
\end{align}
It follows from \eqref{eqn:RandomFilam-12} that
\begin{equation}
\gamma^\prime(t)=\gamma_0^\prime+\int_0^t\nabla
u^{\gamma(s)}(\gamma(s))\gamma^\prime(s)ds,t\in[0,T^*).\label{eqn:RandomFilam-12'}
\end{equation}
Therefore, by Lemmata \ref{lem:EnergyMotionIntegral} and
\ref{lem:VelocitySmoothness} we have
\begin{align}
\Vert \gamma^\prime(t)\Vert_{L^{\infty}}&\leq\Vert \gamma^\prime_0\Vert_{L^{\infty}}+\int_0^t\Vert \nabla
u^{\gamma(s)}\Vert_{L^{\infty}}\Vert \gamma^\prime(s)\Vert_{L^{\infty}}ds
\leq\Vert \gamma^\prime_0\Vert_{L^{\infty}}+\int_0^tC\mathcal{H}_{\gamma_0}^{1/2}(\gamma(s))\Vert \gamma^\prime(s)\Vert_{L^{\infty}}ds\nonumber\\
&=\Vert \gamma^\prime_0\Vert_{L^{\infty}}+\int_0^t C\mathcal{H}_{\gamma_0}^{1/2}(\gamma_0)\Vert \gamma^\prime(s)\Vert_{L^{\infty}}ds,\qquad t\in[0,T^*).
\end{align}
Then by the Gronwall Lemma we infer our second estimate
\begin{equation}
\Vert \gamma^\prime(t)\Vert_{L^{\infty}}\leq\Vert \gamma^\prime_0\Vert_{L^{\infty}}e^{C\mathcal{H}_{\gamma_0}^{1/2}(\gamma_0)t},\qquad t\in[0,T^*).\label{eqn:GEUEstim-2}
\end{equation}
We will need one more auxiliary estimate. We have
\begin{align}
\Vert \gamma(t)\Vert_{C^{\nu}}&\leq\Vert \gamma_0\Vert_{C^{\nu}}+\int_0^t\Vert
u^{\gamma(s)}(\gamma(s))\Vert_{C^{\nu}}\Vert ds
\leq\Vert \gamma_0\Vert_{C^{\nu}}+\int_0^t\Vert \nabla
u^{\gamma(s)}\Vert_{L^{\infty}}\Vert \gamma(s)\Vert_{C^{\nu}}ds\nonumber\\
&\leq
\Vert \gamma_0\Vert_{C^{\nu}}+\int_0^tC\mathcal{H}_{\gamma_0}^{1/2}(\gamma(s))\Vert \gamma(s)\Vert_{C^{\nu}}ds\nonumber\\
&=\Vert \gamma_0\Vert_{C^{\nu}}+\int_0^tC\mathcal{H}_{\gamma_0}^{1/2}(\gamma_0)\Vert \gamma(s)\Vert_{C^{\nu}}ds,\qquad t\in[0,T^*).
\end{align}
Thus, by the Gronwall Lemma we get
\begin{equation}
\Vert \gamma(t)\Vert_{C^{\nu}}\leq\Vert \gamma_0\Vert_{C^{\nu}}e^{C\mathcal{H}_{\gamma_0}^{1/2}(\gamma_0)t},t\in[0,T^*).\label{eqn:GEUEstim-aux}
\end{equation}
Now we can estimate $C^{\nu}$ norm of $\gamma^\prime$. We have
\begin{align}
\Vert \gamma^\prime(t)\Vert_{C^{\nu}}&\leq\Vert \gamma^\prime_0\Vert_{C^{\nu}}+\int_0^t\Vert \nabla
u^{\gamma(s)}(\gamma(s))\gamma^\prime(s)\Vert_{C^{\nu}}ds\nonumber\\
&\leq \Vert \gamma^\prime_0\Vert_{C^{\nu}}+\int_0^t(\Vert \nabla
u^{\gamma(s)}\Vert_{L^{\infty}}\Vert \gamma^\prime(s)\Vert_{C^{\nu}}+\Vert \gamma^\prime(s)\Vert_{L^{\infty}}\Vert \nabla
u^{\gamma(s)}(\gamma(s))\Vert_{C^{\nu}})ds\nonumber\\
&\leq\Vert \gamma^\prime_0\Vert_{C^{\nu}}+\int_0^t(\Vert \nabla
u^{\gamma(s)}\Vert_{L^{\infty}}\Vert \gamma^\prime(s)\Vert_{C^{\nu}}+\Vert \gamma^\prime(s)\Vert_{L^{\infty}}\Vert \nabla^2
u^{\gamma(s)}\Vert_{L^{\infty}}\Vert \gamma(s)\Vert_{C^{\nu}})ds\nonumber\\
&\leq\Vert \gamma^\prime_0\Vert_{C^{\nu}}\nonumber\\
&+\int_0^t(C\mathcal{H}_{\gamma_0}^{1/2}(\gamma_0)(\Vert \gamma^\prime(s)\Vert_{C^{\nu}}+\Vert \gamma^\prime_0\Vert_{L^{\infty}}\Vert \gamma_0\Vert_{C^{\nu}}e^{C\mathcal{H}_{\gamma_0}^{1/2}(\gamma_0)s}))ds,\qquad t\in[0,T^*),
\end{align}
where last inequality follows from Lemmata
\ref{lem:EnergyMotionIntegral} and \ref{lem:VelocitySmoothness}.
Then by the Gronwall Lemma we get the third estimate
\begin{equation}
\Vert \gamma^\prime(t)\Vert_{C^{\nu}}\leq(\Vert \gamma^\prime_0\Vert_{C^{\nu}}+\Vert \gamma^\prime_0\Vert_{L^{\infty}}\Vert \gamma_0\Vert_{C^{\nu}})e^{C\mathcal{H}_{\gamma_0}^{1/2}(\gamma_0)t},\qquad t\in[0,T^*).\label{eqn:GEUEstim-3}
\end{equation}
It remains to find an estimate for $\Vert R^{\gamma(t)}\Vert_{2\nu}$. We
have
\begin{equation}
R^{\gamma(t)}=R^{\gamma_0}+\int_0^tR^{u^{\gamma(s)}(\gamma(s))}ds,\qquad t\in[0,T^*).
\end{equation}
By identity \eqref{eqn:RegularMap-2} we have for $s\in[0,T^*)$
\begin{align}
R^{u^{\gamma(s)}(\gamma(s))}(\xi,\eta)&=\nabla
u^{\gamma(s)}(\gamma(s,\xi))R^{\gamma(s)}(\xi,\eta)+
\suml_k(\gamma^k(s,\eta)-\gamma^k(s,\xi))\times\nonumber\\
&\int_0^1\left[\frac{\partial u^{\gamma(s)}}{\partial
x_k}(\gamma(s,\xi)+r(\gamma(s,\eta)-\gamma(s,\xi)))-\frac{\partial
u^{\gamma(s)}}{\partial
x_k}(\gamma(s,\xi))\right]dr.\label{eqn:GEUEstim-aux3}
\end{align}
Therefore,
\begin{equation}
\Vert R^{u^{\gamma(s)}(\gamma(s))}\Vert_{{C}_2^{2\nu}}\leq \Vert \nabla
u^{\gamma(s)}\Vert_{L^{\infty}}\Vert R^{\gamma(s)}\Vert_{{C}_2^{2\nu}}+\frac{1}{2}\Vert \gamma(s)\Vert_{C^{\nu}}^2\Vert \nabla^2
u^{\gamma(s)}\Vert_{L^{\infty}},\quad s\in[0,T^*).\label{eqn:GEUEstim-aux4}
\end{equation}
Thus, by inequalities \eqref{eqn:GEUEstim-aux4} and
\eqref{eqn:GEUEstim-aux} we have for $t\in[0,T^*)$
\begin{align}
\Vert R^{\gamma(t)}\Vert_{{C}_2^{2\nu}}&\leq\Vert R^{\gamma_0}\Vert_{{C}_2^{2\nu}}+\int_0^t(\Vert \nabla
u^{\gamma(s)}\Vert_{L^{\infty}}\Vert R^{\gamma(s)}\Vert_{{C}_2^{2\nu}}+\frac{1}{2}\Vert \gamma(s)\Vert_{C^{\nu}}^2\Vert \nabla^2
u^{\gamma(s)}\Vert_{L^{\infty}})ds\nonumber\\
&\leq\Vert R^{\gamma_0}\Vert_{{C}_2^{2\nu}}+\int_0^t(\Vert \nabla
u^{\gamma(s)}\Vert_{L^{\infty}}\Vert R^{\gamma(s)}\Vert_{{C}_2^{2\nu}}+\Vert \gamma_0\Vert_{C^{\nu}}e^{C\mathcal{H}_{\gamma_0}^{\half}(\gamma_0)t}\Vert \nabla^2
u^{\gamma(s)}\Vert_{L^{\infty}})ds\nonumber\\
&\leq\Vert R^{\gamma_0}\Vert_{{C}_2^{2\nu}}+C(\Vert \gamma_0\Vert_{C^{\nu}},\mathcal{H}_{\gamma_0}^{1/2}(\gamma_0))e^{C\mathcal{H}_{\gamma_0}^{1/2}(\gamma_0)t}\nonumber\\
&+\int_0^tC\mathcal{H}_{\gamma_0}^{1/2}(\gamma_0)\Vert R^{\gamma(s)}\Vert_{{C}_2^{2\nu}}ds,\label{eqn:GEUEstim-aux5}
\end{align}
where in the last inequality we used Lemmata
\ref{lem:EnergyMotionIntegral} and \ref{lem:VelocitySmoothness}.
Hence, by the Gronwall Lemma we get
\begin{equation}
\Vert R^{\gamma(t)}\Vert_{{C}_2^{2\nu}}\leq(\Vert R^{\gamma_0}\Vert_{{C}_2^{2\nu}}+C(\Vert \gamma_0\Vert_{C^{\nu}},\mathcal{H}_{\gamma_0}^{1/2}(\gamma_0))e^{C\mathcal{H}_{\gamma_0}^{1/2}(\gamma_0)t}))
e^{C\mathcal{H}_{\gamma_0}^{1/2}(\gamma_0)t},\qquad t\in[0,T^*),\label{eqn:GEUEstim-4}
\end{equation}
and combining estimates \eqref{eqn:GEUEstim-1},
\eqref{eqn:GEUEstim-2}, \eqref{eqn:GEUEstim-3}, and
\eqref{eqn:GEUEstim-4} we prove following a-priori estimate
\begin{equation}
\Vert \gamma(t)\Vert_{\mathcal{D}_{\gamma_0}}\leq
K(1+\mathcal{H}_{\gamma_0}^{1/2}(\gamma_0))(1+\Vert \gamma_0\Vert_{\mathcal{D}_{\gamma_0}})\Vert \gamma_0\Vert_{\mathcal{D}_{\gamma_0}}e^{C\mathcal{H}_{\gamma_0}^{1/2}(\gamma_0)t},\quad t\in[0,T^*),\label{eqn:GEUEstim-5}
\end{equation}
and the result follows.
\end{proof}

\begin{bibdiv}
\begin{biblist}

\bib{arnold_topological_1998}{book}{	
    edition = {Corrected},
	title = {Topological Methods in Hydrodynamics},
	isbn = {{038794947X}},
	publisher = {Springer},
	author = {Arnold, V. I.},
	author = {Khesin, B. A.},
	month = {apr},
	year = {1998},
}

\bib{[BKM-1984]}{article}{
   author={Beale, J. T.},
   author={Kato, T.},
   author={Majda, A.},
   title={Remarks on the breakdown of smooth solutions for the $3$-D Euler
   equations},
   journal={Comm. Math. Phys.},
   volume={94},
   date={1984},
   number={1},
   pages={61--66},
   issn={0010-3616},
   review={\MR{763762 (85j:35154)}},
}

\bib{[Bell+Markus_1992]}{article}{
   author={Bell, J. B.},
   author={Marcus, D. L.},
   title={Vorticity intensification and transition to turbulence in the
   three-dimensional Euler equations},
   journal={Comm. Math. Phys.},
   volume={147},
   date={1992},
   number={2},
   pages={371--394},
   issn={0010-3616},
   review={\MR{1174419 (93c:76048)}},
}

\bib{[Bers+Bess_2002]}{article}{
   author={Berselli, L. C.},
   author={Bessaih, H.},
   title={Some results for the line vortex equation},
   journal={Nonlinearity},
   volume={15},
   date={2002},
   number={6},
   pages={1729--1746},
   issn={0951-7715},
   review={\MR{1938468 (2003m:76028)}},
   doi={10.1088/0951-7715/15/6/301},
}

\bib{[Bers+Gub_2007]}{article}{
   author={Berselli, L. C.},
   author={Gubinelli, M.},
   title={On the global evolution of vortex filaments, blobs, and small
   loops in 3D ideal flows},
   journal={Comm. Math. Phys.},
   volume={269},
   date={2007},
   number={3},
   pages={693--713},
   issn={0010-3616},
   review={\MR{2276358 (2007m:76027)}},
   doi={10.1007/s00220-006-0142-x},
}

\bib{[Bess+Gub_2005]}{article}{
   author={Bessaih, H.},
   author={Gubinelli, M.},
   author={Russo, F.},
   title={The evolution of a random vortex filament},
   journal={Ann. Probab.},
   volume={33},
   date={2005},
   number={5},
   pages={1825--1855},
   issn={0091-1798},
   review={\MR{2165581 (2006i:60069)}},
   doi={10.1214/009117905000000323},
}

\bib{[Brylinski-1992]}{book}{
   author={Brylinski, J.-L.},
   title={Loop spaces, characteristic classes and geometric quantization},
   series={Progress in Mathematics},
   volume={107},
   publisher={Birkh\"auser Boston Inc.},
   place={Boston, MA},
   date={1993},
   pages={xvi+300},
   isbn={0-8176-3644-7},
   review={\MR{1197353 (94b:57030)}},
}

\bib{Chorin}{book}{
   author={Chorin, A. J.},
   title={Vorticity and turbulence},
   series={Applied Mathematical Sciences},
   volume={103},
   publisher={Springer-Verlag},
   place={New York},
   date={1994},
   pages={viii+174},
   isbn={0-387-94197-5},
   review={\MR{1281384 (95m:76043)}},
}

\bib{[Friz-2005]}{article}{
   author={Friz, P. K.},
   title={Continuity of the It\^o-map for H\"older rough paths with
   applications to the support theorem in H\"older norm},
   conference={
      title={Probability and partial differential equations in modern
      applied mathematics},
   },
   book={
      series={IMA Vol. Math. Appl.},
      volume={140},
      publisher={Springer},
      place={New York},
   },
   date={2005},
      volume={XX},
   pages={117--135},
   review={\MR{2202036 (2007f:60070)}},
   doi={10.1007/978-0-387-29371-4\_8},
}

\bib{[FrizVictoir]}{book}{
   author={Friz, P. K.},
   author={Victoir, N. B.},
   title={Multidimensional stochastic processes as rough paths},
   series={Cambridge Studies in Advanced Mathematics},
   volume={120},
   note={Theory and applications},
   publisher={Cambridge University Press},
   place={Cambridge},
   date={2010},
   pages={xiv+656},
   isbn={978-0-521-87607-0},
   review={\MR{2604669 (2012e:60001)}},
}

\bib{Gallavotti}{book}{
   author={Gallavotti, G.},
   title={Foundations of fluid dynamics},
   series={Texts and Monographs in Physics},
   note={Translated from the Italian},
   publisher={Springer-Verlag},
   place={Berlin},
   date={2002},
   pages={xviii+513},
   isbn={3-540-41415-0},
   review={\MR{1872661 (2003e:76002)}},
}

\bib{[Gubinelli-2004]}{article}{
   author={Gubinelli, M.},
   title={Controlling rough paths},
   journal={J. Funct. Anal.},
   volume={216},
   date={2004},
   number={1},
   pages={86--140},
   issn={0022-1236},
   review={\MR{2091358 (2005k:60169)}},
   doi={10.1016/j.jfa.2004.01.002},
}

\bib{Hairer}{article}{
   author={Hairer, M.},
   title={Rough stochastic PDEs},
   journal={Comm. Pure Appl. Math.},
   volume={64},
   date={2011},
   number={11},
   pages={1547--1585},
   issn={0010-3640},
   review={\MR{2832168}},
   doi={10.1002/cpa.20383},
}

\bib{KPZ}{article}{
   author={Hairer, M.},
   title={Solving the KPZ equation},
   journal={Ann. Math.},
   note={to appear},
   date={2013},
}

\bib{Helm1858}{article}{
	title = {\"Uber Integrale der hydrodynamischen Gleichungen, welche den Wirbelbewegungen entsprechen.},
	volume = {1858},
	issn = {0075-4102, 1435-5345},
	doi = {10.1515/crll.1858.55.25},
	number = {55},
	journal = {Journal f\"ur die reine und angewandte Mathematik {(Crelles} Journal)},
	author = {Helmholtz, H.},
	year = {1858},
	pages = {25--55},
}

\bib{Kelvin1869}{article}{
	author={ Thomson (Lord Kelvin), W.},
	title={On vortex motion},
	journal={Trans. Royal Soc. Edin.},
	number={25},
	pages={217--260},
	year={1869},
}

\bib{[Lyons-1998]}{article}{
   author={Lyons, T. J.},
   title={Differential equations driven by rough signals},
   journal={Rev. Mat. Iberoamericana},
   volume={14},
   date={1998},
   number={2},
   pages={215--310},
   issn={0213-2230},
   review={\MR{1654527 (2000c:60089)}},
   doi={10.4171/RMI/240},
}

\bib{[LionsStFlour]}{book}{
   author={Lyons, T. J.},
   author={Caruana, M.},
   author={L{\'e}vy, T.},
   title={Differential equations driven by rough paths},
   series={Lecture Notes in Mathematics},
   volume={1908},
   note={Lectures from the 34th Summer School on Probability Theory held in
   Saint-Flour, July 6--24, 2004;
   With an introduction concerning the Summer School by Jean Picard},
   publisher={Springer},
   place={Berlin},
   date={2007},
   pages={xviii+109},
   isbn={978-3-540-71284-8},
   isbn={3-540-71284-4},
   review={\MR{2314753 (2009c:60156)}},
}
	
\bib{[LyonsQian-2002]}{book}{
   author={Lyons, T.},
   author={Qian, Z.},
   title={System control and rough paths},
   series={Oxford Mathematical Monographs},
   note={Oxford Science Publications},
   publisher={Oxford University Press},
   place={Oxford},
   date={2002},
   pages={x+216},
   isbn={0-19-850648-1},
   review={\MR{2036784 (2005f:93001)}},
   doi={10.1093/acprof:oso/9780198506485.001.0001},
}

\bib{[Rosenhead-1930]}{article}{
	title = {The Spread of Vorticity in the Wake Behind a Cylinder},
	volume = {127},
	issn = {1364-5021, 1471-2946},
	doi = {10.1098/rspa.1930.0078},
	number = {806},
	journal = {Proceedings of the Royal Society A: Mathematical, Physical and Engineering Sciences},
	author = {Rosenhead, L.},
	year = {1930},
	pages = {590--612},
}

\bib{[VincMeneguzzi-1991]}{article}{
	title = {The Spatial Structure and Statistical Properties of Homogeneous Turbulence},
	volume = {225},
	doi = {10.1017/S0022112091001957},
	journal = {Journal of Fluid Mechanics},
	author = {Vincent, A.},
	author = {Meneguzzi, M.},
	year = {1991},
	pages = {1--20},
}

\bib{[Young-1936]}{article}{
   author={Young, L. C.},
   title={An inequality of the H\"older type, connected with Stieltjes
   integration},
   journal={Acta Math.},
   volume={67},
   date={1936},
   number={1},
   pages={251--282},
   issn={0001-5962},
   review={\MR{1555421}},
   doi={10.1007/BF02401743},
}

\end{biblist}
\end{bibdiv}

\end{document}